\def\dicho#1{\expandafter\@dicho\csname c@#1\endcsname}
\def\@dicho#1{\ifnum#1>1 or \else\fi(\@Roman#1)}
\AddEnumerateCounter{\dicho}{\@dicho}{or (III)}
\newlist{dichotomy}{enumerate}{1}
\setlist[dichotomy]{label=\dicho*,leftmargin=1.5cm}
\newcommand{\set}[1]{\left\lbrace #1 \right\rbrace}
\newcommand{\field}[1]{\mathbb{#1}}  
\newcommand{\Q}{\field{Q}} 
\newcommand{\R}{\field{R}} 
\newcommand{\Z}{\field{Z}} 
\newcommand{\F}{\field{F}} 
\renewcommand{\P}{\field{P}}
\newcommand{\Mod}[1]{\ (\mathrm{mod}\ #1)}
    \DeclareFontFamily{U}{wncy}{}
    \DeclareFontShape{U}{wncy}{m}{n}{<->wncyr10}{}
    \DeclareSymbolFont{mcy}{U}{wncy}{m}{n}
    \DeclareMathSymbol{\Sha}{\mathord}{mcy}{"58}
\DeclareMathOperator{\rk}{rk}
\DeclareMathOperator{\Aut}{Aut}
\DeclareMathOperator{\Sel}{Sel}
\newtheorem{lemma}{Lemma}
\newtheorem{theorem}[lemma]{Theorem}
\newtheorem{proposition}[lemma]{Proposition}
\newtheorem{corollary}[lemma]{Corollary}
\newtheorem{conjecture}[lemma]{Conjecture}
\theoremstyle{definition}
\newtheorem{remark}[lemma]{Remark}
\numberwithin{lemma}{section}
\numberwithin{equation}{section} 
\numberwithin{figure}{section}
\title[Torsion subgroups of elliptic curves and Granville's conjecture]{Torsion subgroups of elliptic curves over quadratic fields and a conjecture of Granville}
\author{Barinder S. Banwait}
\address{Barinder S. Banwait \\
Department of Mathematics \& Statistics\\  
Boston University\\
Boston, MA\\
USA}
\email{barinder.s.banwait@gmail.com}
\author{Maarten Derickx}
\address{Maarten Derickx,
Den Haag,
The Netherlands}
\email{maarten@mderickx.nl}
\date{}
\providecommand\@dotsep{5}
\renewcommand{\listoftodos}[1][\@todonotes@todolistname]{%
  \@starttoc{tdo}{#1}}
\subjclass[2010]
{11G05  (primary), 
11Y60,   
11G15.   
(secondary)}
\begin{document}

\maketitle

\begin{abstract}
We study the problem of determining the groups that can arise as the torsion subgroup of an elliptic curve over a fixed quadratic field, building on work of Kamienny-Najman, Krumm, and Trbović. By employing techniques to study rational points on curves developed by Bruin and Stoll, we determine the possible torsion subgroups of elliptic curves over quadratic fields $\mathbb{Q}(\sqrt{d})$ for all squarefree $d$ with $|d| < 800$, improving on the previously known range of $-5 < d < 26$. We use our computations to study the validity of a conjecture of Granville concerning how many twists of a given hyperelliptic curve admit a nontrivial rational point.
\end{abstract}


\section{Introduction}

Let $E$ be an elliptic curve over a number field $K$. The Mordell-Weil theorem establishes that the group $E(K)$ of $K$-rational points of $E$ is finitely generated as an abelian group, so one has an isomorphism of groups \[E(K) \cong E(K)_{tors} \oplus \Z^r,\] where $E(K)_{tors}$ is a finite abelian group called the \textbf{torsion subgroup} of $E/K$, and $r$ is called the \textbf{rank} of $E/K$.

The question of classifying which possible torsion subgroups may arise as one varies over elliptic curves over a fixed number field $K$ - what in this paper we refer to as \textbf{uniformity of torsion over $K$} - goes back to Levi's 1908 ICM address in Rome \cite{levi1909sull}, in which he conjectured that for $K = \Q$, there are only $15$ possible groups that can arise. As is well-known, this conjecture was finally established in Mazur's seminal work \cite{Mazur3}; see \cite{schappacher1996beppo} for an interesting history of this landmark result in the arithmetic of elliptic curves.

\begin{theorem}[Mazur (1977)]\label{thm:mazur}
    Let $E$ be an elliptic curve over $\Q$. Then $E(\Q)_{tors}$ is isomorphic to one of the following $15$ groups:
\[
\begin{aligned}
\Z/N\Z &\hspace{3em} 1 \leq N \leq 10 \mbox{ or } N = 12;\\
\Z/2\Z \oplus \Z/2N\Z &\hspace{3em} 1 \leq N \leq 4.
\end{aligned}
\]
\end{theorem}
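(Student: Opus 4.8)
This is Mazur's theorem, and the honest plan is to follow \cite{Mazur3}: there is no elementary route. The first move is to recast the statement geometrically. An elliptic curve $E/\Q$ together with a rational point of exact order $N$ is the same data as a non-cuspidal $\Q$-rational point on the modular curve $X_1(N)$, and a rational torsion subgroup of the shape $\Z/2\Z\oplus\Z/2N\Z$ corresponds to a rational point on the corresponding fibre product $X_1(2,2N)$. For $N\le 10$, $N=12$, and the small even cases, these curves have genus $0$ and possess a cusp, hence are isomorphic to $\P^1$ and carry infinitely many rational points; parametrising these families produces exactly the $15$ groups in the statement. So the theorem is equivalent to the assertion that for every other $N$ the relevant modular curve has \emph{no} non-cuspidal rational point. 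Using that a point of order $N$ forces a point of order every divisor of $N$ (so a covering map $X_1(N)\to X_1(M)$ propagates emptiness), one reduces to a short list of cases: the prime powers $16, 25, 27, 32, 64, 81, \dots$, a handful of small composite levels (e.g. $13, 14, 15, 18, 21, 24, \dots$), and, decisively, the primes $p\ge 11$ with $p\ne 13$.

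The small cases I would dispatch one at a time. The curves $X_1(N)$ for $N=11,14,15$ are elliptic curves (of conductors $11,14,15$); one checks their Mordell--Weil groups are finite and consist entirely of cusps, e.g. $X_1(11)(\Q)\cong\Z/5\Z$. For $N=13,16,18,25,27$ (and the $X_1(2,2N)$ curves) one has (hyper)elliptic curves of higher genus; here the plan is to exhibit that the Jacobian has rank $0$ over $\Q$, then use a Chabauty-style or explicit torsion argument to enumerate all rational points and verify they are cuspidal. Covering maps then clear the remaining composite and prime-power levels.

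The core difficulty, and the part that needs Mazur's machinery in full, is the primes $p$. One passes to $X_0(p)$, since a rational point of order $p$ yields in particular a rational cyclic $p$-isogeny, i.e.\ a rational point of $X_0(p)$. The plan is: (i) study the Eisenstein ideal $\mathfrak{I}$ in the Hecke algebra $\mathbb{T}$ acting on $J_0(p)$ and prove that the Eisenstein quotient $\tilde J=J_0(p)/\mathfrak{I}J_0(p)$ is a nonzero abelian variety of Mordell--Weil rank $0$ over $\Q$ --- this is the essential, hard arithmetic input, ultimately an $L$-value non-vanishing statement; (ii) compose the Abel--Jacobi embedding $X_0(p)\hookrightarrow J_0(p)$ with the projection to $\tilde J$, so that any rational point of $X_0(p)$ lands in the \emph{finite} group $\tilde J(\Q)$; (iii) run a formal-immersion argument at a carefully chosen auxiliary prime $\ell$, showing the relevant map of completed local rings is injective modulo $\ell$, which forces any such rational point to reduce to a cusp modulo $\ell$ and hence to be a cusp; (iv) separately rule out the residual CM points, whose $j$-invariants are known not to admit rational $p$-torsion. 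The main obstacle is unmistakably step (i) together with its exploitation in (iii): extracting from a mere rank-$0$ statement the exact list of rational points requires the detailed structure of $\mathfrak{I}$, the cuspidal and Shimura subgroups of $J_0(p)$, and the formal-immersion criterion --- precisely the contribution of \cite{Mazur3}.
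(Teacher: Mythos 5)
The paper does not prove this statement: it is quoted as background with a citation to \cite{Mazur3}, and the only in-paper discussion of its proof is the historical remark in Section 3 (Ogg's point of order $19$ on $J_1(13)$ \cite{ogg1972rational}, the Mazur--Tate descent ruling out $13$-torsion \cite{mazur1973points}, and Mazur's generalisation to all primes $p \geq 13$ combined with Kubert's work \cite{Kubert:Torsion} on the remaining levels). So there is no internal proof to compare yours against; what you have written is an outline of the literature proof, and as such it is essentially faithful: translation into non-cuspidal rational points on $X_1(N)$ and $X_1(2,2N)$, genus-$0$ parametrisations producing exactly the $15$ groups, covering maps reducing to finitely many critical levels, small levels via rank-$0$ Jacobians and explicit descent, and the prime levels via the Eisenstein quotient of $J_0(p)$ together with a formal-immersion argument at the cusp.

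Two details of your sketch should be corrected, though neither affects the overall shape. First, in Mazur's original argument the finiteness of $\tilde J(\Q)$ is established by an algebraic descent with respect to an Eisenstein prime, exploiting the cuspidal subgroup of order equal to the numerator of $(p-1)/12$; it is not an $L$-value non-vanishing input. The analytic route you allude to (winding element plus Kolyvagin--Logach\"{e}v/Kato) is the later Kamienny--Merel streamlining --- and, incidentally, the mechanism this paper itself uses for its rank-zero verifications via \cite{kato2004p}. Second, the separate elimination of CM points belongs to the classification of rational points on $X_0(p)$ (the isogeny theorem), not to the torsion statement: for torsion, the formal-immersion step already forces the candidate point to be cuspidal. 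Finally, a bookkeeping point: $13$ is prime, not composite, and was settled by the Mazur--Tate descent on the genus-$2$ curve $X_1(13)$, while the composite and prime-power levels rest on Kubert and earlier authors rather than on Mazur's machinery; you place these cases correctly but label them loosely. With those adjustments your plan is the standard proof, whose full execution is of course the content of \cite{Mazur3} and cannot be reproduced in a few pages --- which is precisely why the paper states the theorem with a citation only.
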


Since Mazur's work in the 70s, much progress has been made towards a closely related question that in the literature is referred to as \textbf{strong uniformity of torsion}. Here, instead of fixing a number field $K$, one fixes an integer $d$, and asks for a classification $\Phi(d)$ of possible torsion subgroups of \emph{any} elliptic curve over \emph{any} number field of degree $d$ over $
\Q$. Mazur's result above establishes this for $d = 1$. Subsequently, in a long series of papers in the 80s by Kenku, Momose, and Kamienny \cite{Kamienny:PointsOrderp,Kamienny:TorsionSubgroupTotallyReal,Kamienny:TorsionAllQuadratic,Kamienny:TorsionAllQuadratic2,Kamienny:TorsionPoints,Kamienny:TorsionQuadratic,Kenku:TwoPowerTorsionQuadratic,Kenku:CertainTorsionQuadratic,Kenku:ModularCurvesQuadratic,Kenku:RationalTorsionQuadratic,KenkuMomose:TorsionQuadratic,momose1984p,Momose:Xsplitp}, the $d = 2$ case was studied; the culmination of the work of Kenku and Momose was \cite{KenkuMomose:TorsionQuadratic}, in which they proposed a list of $26$ possible torsion subgroups for elliptic curves over quadratic fields. The authors showed that these $26$ groups arise infinitely often as one varies both the elliptic curve and the quadratic field, and moreover showed that this list would be complete if one can show that no elliptic curve over a quadratic field admits a torsion point of prime order $p > 13$. This fact about bounding the so-called \textbf{torsion primes} $S(d)$ in degree $d = 2$ by $13$ was subsequently established by Kamienny \cite{Kamienny:TorsionQuadratic}; one therefore has the following result that we refer to as the \textbf{Kamienny-Kenku-Momose} (hereafter KKM) classification.

\begin{theorem}[Kamienny-Kenku-Momose (1992)]\label{thm:strong_quadratic_uniformity}
Let $E$ be an elliptic curve over a quadratic field $K$. Then $E(K)_{tors}$ is isomorphic to one of the following $26$ groups:
\[
\begin{aligned}
\Z/N&\Z \hspace{3em} 1 \leq N \leq 16 \mbox{ or } N = 18;\\
\Z/2\Z \oplus \Z/2N&\Z \hspace{3em} 1 \leq N \leq 6;\\
\Z/3\Z \oplus \Z/3N&\Z \hspace{3em} 1 \leq N \leq 2;\\
\Z/4\Z \oplus \Z/4&\Z.
\end{aligned}
\]
\end{theorem}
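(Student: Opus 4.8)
The plan is to establish the theorem as the combination of two essentially independent assertions: that each of the $26$ listed groups actually occurs as $E(K)_{tors}$ for some elliptic curve $E$ over some quadratic field $K$, and that no other group can occur. The first assertion is the softer one. For each group $G$ in the list, the modular curve parametrising elliptic curves equipped with a $G$-level structure --- namely $X_1(N)$ for $G = \Z/N\Z$, $X_1(2,2N)$ for $G = \Z/2\Z\oplus\Z/2N\Z$, $X_1(3,3N)$ for $G = \Z/3\Z\oplus\Z/3N\Z$, and $X(4)$ for $G = \Z/4\Z\oplus\Z/4\Z$ --- has a rational point (or, in the last case, a $\Q(i)$-point) and admits a degree-$2$ map to the projective line, being of genus $0$ or $1$ or hyperelliptic; pulling back the rational points of $\P^1$ produces infinitely many quadratic points, and hence realises $G$ infinitely often as one varies both the curve and the quadratic field. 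I would simply recall the explicit one-parameter families of Kenku and Momose here.

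For the converse, the first and deepest step is to bound the torsion primes: if $E/K$ has a $K$-rational point of prime order $p$ with $[K:\Q]=2$, then $p \le 13$. Following Kamienny, such a point yields a $K$-rational point on $X_1(p)$, and taking its Galois orbit produces a $\Q$-rational point $x$ on the symmetric square $X_1(p)^{(2)}$ that does not lie in the image of $X_1(p)(\Q)$ under the diagonal. Compose the Abel--Jacobi map based at the cusp $\infty^{(2)}$ with the projection to the winding quotient (equivalently, a suitable Eisenstein quotient) $J_e$ of $J_1(p)$; by the analytic results on nonvanishing of $L$-functions at the centre of the critical strip (Kolyvagin--Logachev, as exploited by Mazur and Kamienny), $J_e$ has Mordell--Weil rank $0$ over $\Q$, so the image of $x$ is a torsion point, necessarily equal to the image of $\infty^{(2)}$. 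One then invokes the formal immersion criterion: for a small auxiliary prime $\ell$ (one may take $\ell = 3$, which has good reduction since $p > 13$) the composite $X_1(p)^{(2)} \to J_e$ is a formal immersion at $\infty^{(2)}$ in characteristic $\ell$, which forces $x$ to reduce to $\infty^{(2)}$ modulo every prime of $K$ above $\ell$; a ramification and good-reduction analysis at $\ell$ then produces a contradiction once $p > 13$. The crux, and the main obstacle, is the verification of the formal immersion hypothesis \emph{uniformly in $p$}: it reduces to checking that a certain system of weight-$2$ cusp forms spanning the cotangent space of $J_e$ at the cusp has $q$-expansions that remain linearly independent modulo $\ell$, and controlling this for all primes $p$ simultaneously is precisely the technical heart of Kamienny's work.

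With $S(2) \subseteq \{2,3,5,7,11,13\}$ established, the possible orders of $K$-rational torsion points, and hence the possible isomorphism types of $E(K)_{tors}$, are confined to a finite explicit list of candidate groups of the shape $\Z/N\Z$ and $\Z/M\Z\oplus\Z/MN\Z$; it remains to eliminate those not appearing in the statement --- for instance $\Z/N\Z$ with $N = 17$ or $N \ge 19$, and products such as $\Z/2\Z \oplus \Z/14\Z$, $\Z/3\Z\oplus\Z/9\Z$, and $\Z/4\Z\oplus\Z/8\Z$. For each surviving level one studies the quadratic points on the associated modular curve $X_1(N)$ or $X_1(M,N)$: when this curve has genus $\le 1$ one argues via its Mordell--Weil rank over $\Q$ and over the finitely many relevant quadratic fields, and when it has genus $\ge 2$ one combines rank-$0$ information on a quotient of its Jacobian (again furnished by the winding or Eisenstein quotient) with a formal-immersion argument, or else with the explicit defining equations and Weierstrass-point analysis of Kenku and Momose, to conclude that every quadratic point is a cusp, a CM point, or otherwise already accounted for in the list. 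Assembling these case analyses with the prime bound and with the realisability of the $26$ groups from the first paragraph completes the proof; the remaining bookkeeping is lengthy but, modulo the formal-immersion verifications flagged above, routine.
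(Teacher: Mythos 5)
The paper does not prove this theorem at all: it is quoted as a known classical result, with the proof outsourced to the cited works of Kenku--Momose (realisability of the $26$ groups and completeness of the list conditional on the torsion-prime bound) and Kamienny (the bound $p \le 13$ for torsion primes over quadratic fields). Your sketch reproduces exactly that decomposition, so in substance you are following the same route the paper's introduction describes, and the outline is correct as a proof strategy. Two caveats on the details: the rank-zero input in Kamienny's actual quadratic-field argument is Mazur's Eisenstein-ideal descent on the Eisenstein quotient of $J_0(p)$, not Kolyvagin--Logachev nonvanishing (the winding-quotient/analytic route is the later Merel--Kamienny--Mazur machinery, though it does also work here), and his formal immersion is checked on $X_0(p)^{(2)}$ at a very small auxiliary prime with the $q$-expansion independence being the genuinely hard verification, as you rightly flag. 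Also, what you call ``routine bookkeeping'' in the last step --- determining the quadratic points on the intermediate modular curves of genus $\ge 2$ such as $X_1(13)$, $X_1(16)$, $X_1(18)$ and the curves eliminating $\Z/2\Z \oplus \Z/14\Z$, $\Z/3\Z \oplus \Z/9\Z$, $\Z/4\Z \oplus \Z/8\Z$ --- is the content of a long series of Kenku--Momose papers and is far from routine; a self-contained proof would have to carry out each of those case analyses explicitly.
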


Kamienny and Mazur subsequently showed \cite{kamienny1995rational} that finiteness of $S(d)$ is equivalent to finiteness of $\Phi(d)$, albeit in a non-effective way (that is, knowing exactly what $S(d)$ is does not immediately allow one to determine $\Phi(d)$), and Merel established finiteness of $S(d)$ for all $d$ \cite{merel1996bornes}, proving the erstwhile Uniform Boundedness conjecture for torsion primes of elliptic curves over number fields. The only other value of $d$ for which $\Phi(d)$ is known fully is $d = 3$, a recent result due to the second author with Etropolski, van Hoeij, Morrow, and Zureick-Brown \cite{derickx2021sporadic}. There are partial results known for $d = 4, 5, 6$ and $7$; see e.g. \cite{derickx2017torsion} or \cite{sutherland2012torsion} and the references contained in the introduction there for the state-of-the-art known about strong uniformity in higher degree number fields.

In this paper, however, we will return to the original uniformity question, and attempt to classify the torsion subgroups of elliptic curves over fixed quadratic fields $K$.

Najman was the first to consider this question, determining which of the $26$ groups in the KKM classification actually arise over each of the cyclotomic quadratic fields $\Q(\zeta_3)$ and $\Q(\zeta_4)$ \cite{najman2011torsion}. Very soon thereafter, Kamienny and Najman \cite{kamienny2012torsion} determine the smallest quadratic field (ordered by absolute value of discriminant) realising each of the 26 torsion subgroups in the KKM classification, and obtain results about the rank of elliptic curves having prescribed torsion. This latter phenomenon of the interplay between rank and torsion was further investigated by Bosman, Bruin, Dujella and Najman \cite{bosman2014ranks}, and yielded very striking results such as any elliptic curve over any quadratic field with torsion subgroup $\Z/13\Z$ must have even rank.

The idea of determining the torsion subgroups of elliptic curves over all quadratic fields $\Q(\sqrt{d})$ in some discriminant range first appears in a paper of Trbović \cite{trbovic2020torsion}, who attempted such a classification for $0 < d < 100$; there were however $16$ values of $d$ in this range for which she was unable to fully decide on which torsion subgroups arise; more precisely, for these $16$ values, it remained undecided whether $\Z/16\Z$ arises as a possible torsion subgroup. Because of these $16$ unknowns, a result classifying torsion in a range of $d$ was only established for $0 < d < 24$. Coupling with this Najman's work \cite{najman2011torsion}, the range of known quadratic torsion is currently $-5 < d < 24$.

The main result of this paper resolves the situation for these $16$ values, considers negative values of $d$, and significantly extends the discriminant range. To succinctly state our results, we make the following observations:

\begin{itemize}
    \item the $15$ groups in Mazur's classification (which form a subset of the $26$ groups in the KKM classification) arise over every quadratic field; see \cite[Section 2]{trbovic2020torsion}.
    \item the groups $\Z/3\Z \oplus \Z/3\Z$ and $\Z/3\Z \oplus \Z/6\Z$ (respectively, the group $\Z/4\Z \oplus \Z/4\Z$) correspond to having full $3$ (respectively, $4$) torsion, and so by a standard corollary of Galois equivariance of the Weil pairing, can arise only over $\Q(\zeta_3) = \Q(\sqrt{-3})$ (respectively, $\Q(\zeta_4) = \Q(\sqrt{-1})$). 
\end{itemize}

This reduces the problem to determining, for each quadratic field in our range, which of the remaining $8$ torsion subgroups arise over that quadratic field. For a group $\Z/M\Z \oplus \Z/N\Z$ (with $M | N$) and a positive integer $B$, we therefore make the following definition: \[ T_B(M,N) := \left\{|d| < B \mbox{ squarefree }: \Z/M\Z \oplus \Z/N\Z \mbox{ is a torsion group over } \Q(\sqrt{d})\right\}. \] As usual\footnote{the same convention is used for modular curves}, we write $T_B(N) := T_B(1,N)$. Our task is then to determine, for some chosen value of $B$, the $8$ sets
\begin{align}\label{eqn:the_8_cases}
\begin{split}
    \mbox{ genus 1 } &: \ T_B(11), T_B(14), T_B(15), T_B(2,10), T_B(2,12)\\
    \mbox{ genus 2 } &: \ T_B(13), T_B(16), T_B(18).
\end{split}
\end{align}
These sets have been labelled with a genus that corresponds to the genus of the modular curve $X_1(M,N)$ that plays the role of a moduli space of elliptic curves having $\Z/M\Z \oplus \Z/N\Z$ as a subgroup of their torsion subgroup, and the genus plays a significant role in the arithmetic geometry of these modular curves. The genus $2$ cases pose a more significant challenge, and most of the paper will be focussed on these three cases. Indeed, Kamienny and Najman \cite{kamienny2012torsion} already showed that determination of the genus $1$ sets above correspond to computing whether the rank of the $d$-twist of each of the associated (elliptic) modular curves is positive; we carry this out for $B = 10{,}000$ in \Cref{sec:genus-1}.

We may now state our main result.

\begin{theorem}\label{thm:main}
    \begin{enumerate}
        \item Letting $S_{13}$ denote the set 
        \begin{align*}
    S_{13} := &\left\{ 17, 113, 193, 313, 481, 1153, 1417, \right. \\ &\ \left. 2257, 3769, 3961, 5449, 6217, 6641, 9881 \right\},
    \end{align*}
        we have \[ S_{13} \subseteq T_{10{,}000}(13) \subseteq S_{13} \cup \left\{9689\right\}.\]
        \item Letting $S_{18}$ denote the set \[ S_{18} = \left\{ 33, 337, 457, 1009, 1993, 2833, 7369, 8241, 9049 \right\}, \] we have \[ S_{18} \subseteq T_{10{,}000}(18) \subseteq S_{18} \cup \left\{2841, 4729, 9969\right\}.\]
        
    \item We have \begin{align*}
    T_{800}(16) = &\left\{-671, -455, -290, -119, -15, 10, 15, 41, 51, \right. \\
    &\ \left. 70, 93, 105, 205, 217, 391, 546, 609, 679 \right\}.
    \end{align*}
    \end{enumerate}
\end{theorem}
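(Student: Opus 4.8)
The plan is to convert the question into finding the points of the genus-$2$ modular curve $X_1(16)$ over quadratic fields, since $\Z/16\Z$ occurs as a torsion subgroup over $\Q(\sqrt d)$ precisely when $X_1(16)$ has a non-cuspidal $\Q(\sqrt d)$-rational point; by the KKM classification (\Cref{thm:strong_quadratic_uniformity}) such a curve automatically has torsion subgroup exactly $\Z/16\Z$, since no group in that list strictly contains $\Z/16\Z$. By \Cref{thm:mazur}, $X_1(16)$ has no non-cuspidal $\Q$-rational point, so any relevant point is a genuine quadratic point $P$, and its Galois conjugate $\overline P$ yields an effective $\Q$-rational degree-$2$ divisor $P+\overline P$. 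Riemann--Roch on a genus-$2$ curve forces the usual dichotomy: either $P+\overline P$ is a fibre of the hyperelliptic map $x\colon X_1(16)\to\P^1$ (so $x(P)\in\Q$), or the class $[P+\overline P]$ differs from the hyperelliptic class by a non-zero element of $J_1(16)(\Q)$, which then determines $P+\overline P$ uniquely.

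The second branch is quickly disposed of: I would compute the Mordell--Weil group $J_1(16)(\Q)$ (in the present situation it is finite), enumerate the finitely many classes that can occur, and test which of the corresponding unique effective divisors split over $\Q$ as a conjugate pair of non-cuspidal points; this gives a finite, explicit contribution to $T_{800}(16)$, each such point living over one fixed quadratic field. The bulk of the work is the first branch. Fixing a hyperelliptic model $v^2=f(x)$ for $X_1(16)$, a fibral non-cuspidal point over $\Q(\sqrt d)$ corresponds exactly to a rational point $(t,u)$ with $u\ne 0$ on the quadratic twist $C^{(d)}\colon dv^2=f(x)$ whose $x$-coordinate $t$ is not one of the finitely many (explicit) cuspidal values. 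So the task reduces to deciding, for each of the $\sim 10^3$ squarefree $d$ with $|d|<800$, whether $C^{(d)}$ has such a rational point.

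For the $18$ values of $d$ listed in the theorem I would exhibit the point -- equivalently an explicit elliptic curve over $\Q(\sqrt d)$ with a point of order $16$ -- by a height-bounded search on $C^{(d)}$ (or within the rational family of elliptic curves carrying a point of order $8$), proving the inclusion ``$\supseteq$''. For every other $d$ one must prove that $C^{(d)}$ has no qualifying rational point. A large proportion of these $d$ die to a local obstruction: $C^{(d)}(\Q_v)=\varnothing$ for some place $v$, or every point of $C^{(d)}(\Q_v)$ has cuspidal $x$-coordinate. The surviving $d$ are handled with the Bruin--Stoll machinery -- two-cover descent on the hyperelliptic curve $C^{(d)}$, whose fake $2$-Selmer set, when empty, rules out rational points even with no local obstruction; the Mordell--Weil sieve combining $\operatorname{Jac}(C^{(d)})(\Q)$ with reduction data; and, for twists whose Jacobian has rank too large for a direct Chabauty argument, elliptic-curve Chabauty applied to the elliptic quotients of $C^{(d)}$ arising from factorisations of $f$.

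The main obstacle is precisely this last, exhaustive step: it is a genuinely case-by-case computation over on the order of a thousand twisted genus-$2$ curves, several of which have positive-rank Jacobians so that no single technique applies uniformly and the methods must be carefully combined; the cost of completing it is what confines the $N=16$ result to the range $B=800$, and, for $N=13$ and $N=18$ pushed to $B=10{,}000$, is what leaves the handful of values $9689$, respectively $\{2841,4729,9969\}$, undetermined.
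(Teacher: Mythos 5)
Your overall reduction is the same as the paper's: non-cuspidal quadratic points on the genus-$2$ modular curves correspond, via the Riemann--Roch dichotomy on a hyperelliptic curve (equivalently Krumm's lemma, with the non-fibral branch killed by the finiteness of $J_1(N)(\Q)$), to rational points with $y \neq 0$ on the quadratic twists $dy^2 = f(x)$; these are then exhibited by search in the positive direction and ruled out by Chabauty-type machinery in the negative direction. However, your plan for eliminating the unlisted $d$ in part (3) rests on a false premise. The cusp $(0,0)$ of $X_1(16)$ lies on \emph{every} twist $X_1^d(16)$, since $f(0)=0$ for the model $y^2 = x(x^2+1)(x^2+2x-1)$. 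Consequently no twist has a local obstruction, no fake $2$-Selmer set is ever empty, and the Mordell--Weil sieve cannot prove emptiness either -- three of the four tools you propose for the negative cases are inapplicable for $N=16$ (and the refinement ``every local point has cuspidal $x$-coordinate'' never triggers, because the local points near $(0,0)$ already realise a $v$-adic open set of non-cuspidal $x$-values). This is precisely why the paper treats $N=16$ as the hardest case: one is forced to determine the \emph{full} set $X_1^d(16)(\Q)$ for each $d$, not merely to decide emptiness. The paper does this with two ingredients you omit or underweight: (i) the necessary condition that $J_1^d(16)(\Q)$ have positive rank, proved via a torsion-growth lemma for $J_1(16)$ over quadratic fields and checked through analytic ranks (twisted winding elements) plus Kato's theorem, which filters the several thousand squarefree $d$ down to a few hundred; and (ii) elliptic curve Chabauty applied to the elliptic quotients of the two-covers produced by Bruin--Stoll descent, which computes the rational points on each surviving twist. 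Since elliptic-curve Chabauty is in your toolkit the proposal is repairable, but as written the claimed division of labour collapses, and without the rank filter you would still need to say how the rank-zero twists are finished off (direct rank-$0$ computation, or the torsion-growth argument).

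For parts (1) and (2) your one-sentence treatment does name the right tools -- there local solubility, two-cover descent and the Mordell--Weil sieve genuinely apply, because the twists $X_1^d(13)$ and $X_1^d(18)$ need not have any rational point -- but the paper additionally uses Krumm's congruence conditions on $d$ and the positive-rank criterion for $J_1^d(N)$ to make the range $|d|<10{,}000$ feasible, and the undetermined values $9689$ (for $13$) and $2841, 4729, 9969$ (for $18$) arise specifically because generators of the Mordell--Weil group of the corresponding Jacobians could not be computed, so the sieve could not be run; attributing them generically to ``computational cost'' is close but misses that the sieve, not Chabauty, is the step that fails there.
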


Together with the aforementioned computation of ranks of twists of the five elliptic modular curves alluded to above, this result gives a resolution of the uniformity of torsion question for every quadratic field $\Q(\sqrt{d})$ for $|d| < 800$, offering a significant improvement over the previously known range $-5 < d < 24$. Moreover, for $13$ and $18$-torsion, only the sets $T_{256}(13)$ and $T_{680}(18)$ were previously known \cite[Theorems 2.7.7 and 2.7.8]{krummthesis}; this illustrates that parts (1) and (2) above also greatly improve upon previous results.

We briefly describe our methods, based on the aforementioned work of Najman and Trbovi\'{c}, and give an overview of the paper. The problem may be expressed as determining, for each quadratic field $\Q(\sqrt{d})$ in our range, which of the $8$ modular curves referred to in \eqref{eqn:the_8_cases} admit a noncuspidal $\Q(\sqrt{d})$-point. As mentioned above, for the five genus $1$ modular curves, \Cref{sec:genus-1} explains how this essentially boils down to computing the rank of the $d$-twist of each (elliptic) modular curve, something that we carry out in Magma \cite{magma}. The torsion groups $\Z/13\Z$ and $\Z/18\Z$ are dealt with in \Cref{sec:13_18}, where we use work of Krumm \cite{krummthesis} that reduces the problem to determining the existence of a $\Q$-rational point on the $d$-twist of the modular curves $X_1(13)$ and $X_1(18)$. Krumm already used \emph{Two-cover descent} to resolve this problem for many values of $d$; here we augment this method with two improvements: a necessary condition on the rank of the Jacobian varieties $J_1^d(13)$ and $J_1^d(18)$ of the twisted modular curves, and an application of the \emph{Mordell-Weil sieve}. This yields parts (1) and (2) of \Cref{thm:main}. The reason for the slight ambiguity concerning the values $9689$, $2841$, $4729$ and $9969$ is that the Mordell-Weil sieve method failed here, as we were unable to find explicit generators of the Mordell-Weil group of the above Jacobians.

Dealing with $\Z/16\Z$ as a possible torsion subgroup is the most difficult, since every twist admits a $\Q$-rational point, so we are essentially forced to compute all $\Q$-rational points, and this is the reason for the significantly smaller value of $B = 800$ in part (3). In this range, the \emph{Elliptic curve Chabauty} method is successful in doing this, establishing Part (3) of \Cref{thm:main} in \Cref{sec:16}. 

Resolving the genus $2$ cases comes down to determining whether quadratic twists of these curves in a discriminant range admit a nontrivial rational point. This is something that Granville has previously studied \cite{granville2007rational}; in particular, a conjecture about how many such twists should admit nontrivial points was given there. We compare our results with this conjecture; there is an apparent discrepancy between our data and his conjecture, which we explore in \Cref{sec:results}. Finally in \Cref{sec:future} we indicate some avenues for future research.

We have mainly used Magma in our computations, although some parts have been done in SageMath \cite{Sage}. Our code implementations may be found at
\begin{center}
\url{https://github.com/isogeny-primes/quadratic-torsion}
\end{center}
All filenames given in the paper will refer to files in this repository.

\ack{
We are grateful to Jennifer Balakrishnan, Peter Bruin, Aashraya Jha, Steffen M\"{u}ller, Filip Najman and Alexander Smith for helpful comments and correspondence. We are grateful to Michael Stoll for answering queries about getting his Magma implementation of the Mordell-Weil sieve to work in our setting.

The first named author is supported by Simons Collaboration grant ID \#550023 for the Collaboration on Arithmetic Geometry, Number Theory, and Computation.
}

\section{The five genus 1 cases}\label{sec:genus-1}

In this section we deal with the genus $1$ cases of \eqref{eqn:the_8_cases}; that is, for every quadratic field $\Q(\sqrt{d})$ with $|d| < 10{,}000$, we determine if each of the five modular curves of genus one, viz. $X_1(11)$, $X_1(14)$, $X_1(15)$, $X_1(2, 10)$ and $X_1(2, 12)$, admits a noncuspidal $\Q(\sqrt{d})$-rational point. The following result of Kamienny and Najman shows that this essentially comes down to determining whether or not the rank of the elliptic modular curve over $\Q(\sqrt{d})$ is positive.

\begin{theorem}[Kamienny-Najman, Theorems 15 and 16 in \cite{kamienny2012torsion}]
\
\begin{enumerate}
    \item If $X_1(11)$, $X_1(2, 10)$ or $X_1(2, 12)$ possess a noncuspidal quadratic point, then that point has infinite order.
    \item $X_1(14)$ possesses a noncuspidal $\Q(\sqrt{d})$-torsion point of finite order if and only if $d = -7$.
    \item $X_1(15)$ possesses a noncuspidal $\Q(\sqrt{d})$-torsion point of finite order if and only if $d = -15$.
\end{enumerate}
\end{theorem}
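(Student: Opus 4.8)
The plan is to identify each of the five genus-one modular curves $X_1(M,N)$ with a specific elliptic curve $E_{M,N}$ over $\Q$ (these are well-known Weierstrass models, available for instance in the tables of Sutherland or in the work of Kamienny--Najman), and then to translate the question of noncuspidal quadratic points into a question about Mordell--Weil ranks of quadratic twists. The key input is the elementary observation that a point $P \in X_1(M,N)(\Q(\sqrt d))$ decomposes, via the $\Q$-structure, into its Galois-trace part and its ``anti-trace'' part: writing $\sigma$ for the nontrivial element of $\Gal(\Q(\sqrt d)/\Q)$, the point $P$ gives rise to a point on $E_{M,N}(\Q)$ (namely $P + \sigma(P)$, interpreted suitably) together with a point on the quadratic twist $E_{M,N}^{(d)}(\Q)$. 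Conversely, a $\Q(\sqrt d)$-point that is not defined over $\Q$ corresponds precisely to a nontorsion point on the twist $E_{M,N}^{(d)}$ (after accounting for the finitely many rational points, which by Mazur's theorem and direct inspection are all cuspidal for the three curves in part (1)).

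First I would record, for each of $X_1(11)$, $X_1(2,10)$, $X_1(2,12)$, the (finite, by Mazur) set of $\Q$-rational points and check that every one of them is a cusp; this is a finite computation and is already implicit in the cited Kamienny--Najman theorems. This establishes part (1): any noncuspidal quadratic point cannot be rational, hence is genuinely quadratic, hence (pairing with $\sigma$) produces a point on $E_{M,N}^{(d)}(\Q)$ that is nontorsion --- because the twisted curve $E_{M,N}^{(d)}$ has only finitely many torsion points and one checks these are finitely many explicit values not giving new noncuspidal points --- so the original point has infinite order in $X_1(M,N)(\Q(\sqrt d))$. For parts (2) and (3), the curves $X_1(14)$ and $X_1(15)$ are themselves elliptic curves of rank $0$ over $\Q$ with small torsion ($X_1(14) \cong$ 14a, $X_1(15) \cong$ 15a in Cremona's tables), so I would list all their $\Q$-rational points explicitly; among these, the noncuspidal ones --- if any --- would correspond to $j$-invariants of elliptic curves with the prescribed torsion over $\Q$, which by Mazur's theorem cannot occur for $14$- or $15$-torsion, so all rational points are cusps. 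Then a noncuspidal torsion quadratic point of finite order, not being rational, would again yield a torsion point on a quadratic twist; one computes (e.g.\ via the formula for torsion of quadratic twists, or by checking ranks) that the only twists $E^{(d)}$ with extra rational torsion contributing a noncuspidal point are $d = -7$ for $X_1(14)$ and $d = -15$ for $X_1(15)$, giving the stated exceptions.

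The main obstacle is not conceptual but bookkeeping: one must be careful that the correspondence between $\Q(\sqrt d)$-points of $X_1(M,N)$ and $\Q$-points of the twist is set up correctly at the level of the actual elliptic-curve models (including the behaviour of the cusps and the $2$-torsion under twisting), since a sloppy version of the argument can miss or double-count the finite-order quadratic points, which is exactly where the genuine exceptions $d=-7$ and $d=-15$ live. In practice I would verify the claims directly: for parts (2) and (3) simply enumerate the rational points on the rank-zero curves $X_1(14)$ and $X_1(15)$ and on the relevant twists, and confirm that $d=-7$ (resp.\ $d=-15$) is the unique twist producing a noncuspidal point of finite order; for part (1) confirm that the finitely many rational points of $X_1(11)$, $X_1(2,10)$, $X_1(2,12)$ are all cusps. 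This reduces the theorem to a short, completely explicit computation, which is presumably how Kamienny and Najman proved it; our role here is only to invoke it.
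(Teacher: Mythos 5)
This statement is not proved in the paper at all: it is imported verbatim from Kamienny--Najman (Theorems 15 and 16 of \cite{kamienny2012torsion}) and used as a black box to reduce the five genus-one cases to rank computations of quadratic twists, so there is no internal proof to compare your attempt against. Your sketch does follow the same route as the cited source --- identify each $X_1(M,N)$ with an explicit rank-zero elliptic curve over $\Q$, determine its torsion over quadratic fields, and check which torsion points are cusps --- so in outline it is the right argument. One caution: your blanket claim that a $\Q(\sqrt d)$-point not defined over $\Q$ ``corresponds precisely to a nontorsion point on the twist'' is false as stated; the exceptional noncuspidal points over $\Q(\sqrt{-7})$ on $X_1(14)$ and over $\Q(\sqrt{-15})$ on $X_1(15)$ are exactly non-rational quadratic points of finite order, whose anti-trace lands in the torsion of the twist. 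The trace/anti-trace decomposition controls ranks (up to $2$-torsion ambiguities), but the real content of parts (1)--(3) is a complete determination of $E(\Q(\sqrt d))_{tors}$ for \emph{all} squarefree $d$ simultaneously, together with the identification of which torsion points are cuspidal; this is done by bounding quadratic torsion uniformly (e.g.\ by injecting torsion into reductions $\widetilde{E}(\F_{p^2})$ at good primes, precisely the technique this paper uses for $J_1(13)$, $J_1(18)$ and $J_1(16)$ in \Cref{lem:no_torsion_growth_13,lem:no_torsion_growth_18}), not merely by inspecting torsion of the twists $E^{(d)}$ one $d$ at a time, since a priori there are infinitely many $d$ to check and torsion of order $>2$ on a twist is itself detected only through quadratic torsion of $E$. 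With that step made precise, your reduction and the final finite verifications (including the $d=-7$ and $d=-15$ exceptions) are exactly how the cited result is established.
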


Since, for $E$ an elliptic curve over $\Q$, one has \[ \rk(E(\Q(\sqrt{d})) = \rk(E(\Q)) + \rk(E^d(\Q)), \] where $E^d$ denotes the $d$\textsuperscript{th} quadratic twist of $E$, dealing with these five cases amounts to checking the nonzeroness or otherwise of the $\Q$-rank of several quadratic twists of these elliptic modular curves (noting that the $\Q$-rank of the five original curves is zero). Of course, for $d = -7$ and $-15$, one only has the three curves in (1) to deal with.

Hereafter, $E$ will denote one of the above five elliptic modular curves. We first check via a modular symbols calculation in Sage involving the \emph{twisted winding element} (see \cite[Section 2.2.2]{bosmanthesis} or \cite[Section 6.3.3]{couveignes2011computational} for more details) whether or not the \emph{analytic rank} of $E^d$ is zero. Here we observe that $X_1(2,10)$ is isogenous to $X_0(20)$, and $X_1(2,12)$ is isogenous to $X_0(24)$ (as may be verified in the LMFDB \cite{lmfdb}). The main routine in \path{quadratic_torsion/positive_rank_twists.py} produces a list of values of $d$ for which the analytic rank is positive; these values have been stored in the \path{genus_one_lists} folder. 

If the analytic rank is zero, then by Kolyvagin \cite{kolyvagin1989finiteness}, we know that the rank is zero. If it is nonzero, we then verify in \path{magma_scripts/elliptic.m} that the algebraic rank is nonzero by the standard descent method implemented in Magma.

\section{\texorpdfstring{$X_1(13)$ and $X_1(18)$}{X1(13) or X1(18}}\label{sec:13_18}

In this section we prove parts (2) and (3) of \Cref{thm:main}.  We use the models for these two modular curves from \cite[Section 2]{kamienny2012torsion}:
\begin{align*}
    X_1(13) : y^2 &= x^6 - 2x^5 + x^4 - 2x^3 + 6x^2 - 4x + 1;\\
    X_1(18) : y^2 &= x^6 + 2x^5 + 5x^4 + 10x^3 + 10x^2 + 4x + 1.
\end{align*}
Our task is to decide, for every squarefree $d$ with $|d| < 10{,}000$, whether each of these modular curves admits a noncuspidal $\Q(\sqrt{d})$-rational point. For this we start by employing the methods outlined by Krumm in Section 2 of his thesis \cite{krummthesis}. However we add two important improvements.

The first of these is \Cref{prop:pos-rank-13-18}, which implies for $N=13$ or $18$ that if $J_1^d(N)$ has rank zero, that there are no such $\Q(\sqrt{d})$-rational points. In particular, the use of the magma function \verb|Chabauty0| to compute all point on $X_1^d(N)(\Q)$ as mentioned in \cite[\S 2.5.1]{krummthesis} is no longer necessary since we prove that this set is always empty if $d\neq -3$ and  $J_1^d(N)$ has rank zero. The elimination of this computational step, and using modular symbol computations to determine the analytic rank of $J_1^d(N)$ explains why we could extend our computation to a much large range of $d$.

The second improvement is that we apply the Mordell-Weil sieve to try and show $X_1^d(N)(\Q)=\emptyset$ in cases where the methods of Krumm fail. This extra step explains why we can show for example that 18-torsion does not occur over $\Q(\sqrt{681})$, while this is one of the cases that Krumm couldn't handle.

Throughout this section, we shall use $N$ to denote either $13$ or $18$. We may also ignore the values $d = -1$ and $-3$ since, as mentioned in the introduction, Najman has already dealt with these.

Krumm shows that any noncuspidal $\Q(\sqrt{d})$-rational point on $X_1(N)$ must have $\Q$-rational $x$-coordinate, and therefore yields a $\Q$-rational point on the $d$-twisted modular curve $X_1^d(N)$. Conversely, if $X_1^d(N)$ admits a $\Q$-rational point, then this in turn would correspond to a noncuspidal $\Q(\sqrt{d})$-rational point on $X_1(N)$; these facts are proved in \cite[Lemma 2.7.3]{krummthesis}. This reduces the task to checking whether any of these twists of these two modular curves possess a $\Q$-rational point.

If such a twist of $X_1(N)$ possesses a rational point, then several necessary conditions must be satisfied. We collect these into the following subsections.

\subsection{Everywhere Local solubility}

If a curve $X$ over a number field $K$ admits a $K$-rational point, then it certainly admits a point rational over every completion of $K$; i.e., the curve must be everywhere locally soluble. This is a finite computation to check for any explicitly given curve, for which Magma has an implementation.

\subsection{Congruence conditions on \texorpdfstring{$d$}{d}}

Krumm provides a necessary condition on $d$ for there to exist a $\Q$-point on $X_1^d(N)$.

\begin{proposition}[Krumm, Theorems 2.6.5 and 2.6.9 in \cite{krummthesis}]
\ 
    \begin{enumerate}
        \item If $X_1^d(13)(\Q) \neq \emptyset$, then:
        \begin{enumerate}
            \item $d > 0$;
            \item $d \equiv 1 \Mod{8}$.
        \end{enumerate}
        \medskip
        \item If $X_1^d(18)(\Q) \neq \emptyset$, then:
        \begin{enumerate}
            \item $d > 0$;
            \item $d \equiv 1 \mbox{ or } 9 \Mod{24}$.
        \end{enumerate}
    \end{enumerate}
\end{proposition}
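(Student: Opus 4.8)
The plan is to derive both statements from everywhere local solubility, following Krumm: a $\Q$-point on $X_1^d(N)$ must persist over $\R$ and over every $\Q_p$, and the stated congruences are forced by the real place together with the primes above $2$ (and, for $N=18$, also above $3$). Write the twist as $dy^{2}=f(x)$ with $f$ the displayed sextic. For squarefree $d\neq 1$ the two points at infinity of the smooth model are defined over $\Q(\sqrt d)\neq\Q$ and hence are not rational, and $f$ has no rational root (its only candidates $\pm1$ are ruled out by hand), so any $\Q$-point is an affine point $(x_0,y_0)$ with $x_0,y_0\in\Q$ and $y_0\neq 0$; consequently $d$ and $f(x_0)$ lie in the same square class of $\Q^{\times}$, hence in the same square class of $\R$ and of every $\Q_p$. (For $d=1$ the conclusion is trivial.)

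For the sign conditions I would show $f$ is positive definite on $\R$. Completing the square gives $f(x)=(x^{3}-x^{2}-1)^{2}+4x(x-1)$ when $N=13$ and $f(x)=(x^{3}+x^{2}+2x+3)^{2}-8(x+1)$ when $N=18$. In the first case, for $x\notin(0,1)$ we have $4x(x-1)\geq 0$ so $f(x)\geq(x^{3}-x^{2}-1)^{2}\geq 0$ with equality impossible, while for $x\in(0,1)$ we have $x^{3}-x^{2}-1=x^{2}(x-1)-1<-1$, so $(x^{3}-x^{2}-1)^{2}>1\geq -4x(x-1)$ and again $f(x)>0$. In the second case the identity $(x^{3}+x^{2}+2x+3)-(2x+3)=x^{2}(x+1)\geq 0$ for $x\geq -1$ gives $f(x)\geq(2x+3)^{2}-8(x+1)=(2x+1)^{2}>0$ there (the two squares cannot vanish simultaneously), and $f(x)=(x^{3}+x^{2}+2x+3)^{2}+8|x+1|>0$ for $x\leq -1$. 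Since a point at infinity has a real representative only when $d>0$, real solubility forces $d>0$, giving part~(a) in both cases.

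For the congruence at $2$ the key claim is that $f(x_0)\in(\Q_2^{\times})^{2}$ for every $x_0\in\Q_2$ with $f(x_0)\neq 0$. For $x_0\in\Z_2$ this follows from the square-completions above: writing $f=g^{2}+h$ with $g(x)=x^{3}-x^{2}-1$, $h(x)=4x(x-1)$ (resp. $g(x)=x^{3}+x^{2}+2x+3$, $h(x)=-8(x+1)$), we get $h(x_0)\in 8\Z_2$ (using $x_0(x_0-1)\in 2\Z_2$) and $g(x_0)\in\Z_2^{\times}$, hence $f(x_0)\equiv g(x_0)^{2}\equiv 1\Mod{8\Z_2}$; for $x_0\notin\Z_2$ the same argument applied to the reversed polynomial $\tilde f(s)=s^{6}f(1/s)$ with $s=1/x_0\in 2\Z_2$, together with $f(x_0)=x_0^{6}\tilde f(1/x_0)$, again places $f(x_0)$ in the trivial square class. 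Therefore $d$ is a square in $\Q_2$, which for squarefree $d$ means $d\equiv 1\Mod 8$; with $d>0$ this is part~(1). For $N=18$ one further analyses $\Q_3$: reducing $f$ modulo $3$ over $\PP^{1}(\Z/3\Z)$ shows $f(x_0)$ is a $3$-adic unit $\equiv 1\Mod 3$ whenever $x_0\not\equiv 1\Mod 3$ (including $x_0=\infty$), while for $x_0=1+3s$ the expansion of $f=g^{2}-8(x+1)$ modulo $9$ gives $f(x_0)\equiv 6\Mod 9$, so $v_3(f(x_0))=1$. In every case $f(x_0)$ avoids the square class of the nonsquare $3$-adic units, so $d$ does too, i.e.\ $d\not\equiv 2\Mod 3$; combined with $d\equiv 1\Mod 8$ and $d>0$ this is exactly $d\equiv 1$ or $9\Mod{24}$, proving part~(2).

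The main obstacle is the $3$-adic analysis for $X_1(18)$: in contrast with the $2$-adic case, $f$ is \emph{not} a square modulo $3$ everywhere, vanishing identically mod $3$ along $x_0\equiv 1\Mod 3$, so one is genuinely forced up to modulus $9$ to see that $v_3(f(x_0))$ is odd there and hence that $f(x_0)$ never lands in the nonsquare-unit class. The remaining ingredients --- the two square-completions (found by matching the coefficients of a monic cubic), the positivity estimates, and the $2$-adic congruence --- are entirely routine.
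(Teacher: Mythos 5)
Your proof is correct, and I could verify every step: both square-completion identities hold ($f_{13}=(x^3-x^2-1)^2+4x(x-1)$ and $f_{18}=(x^3+x^2+2x+3)^2-8(x+1)$), the real-place positivity arguments are sound, the $2$-adic claim follows since $g(x_0)$ is always an odd unit and $h(x_0)\in 8\Z_2$, and the $3$-adic step checks out because $f_{18}'(1)=90\equiv 0\pmod 9$, so $f_{18}(1+3s)\equiv 33\equiv 6\pmod 9$. Note, however, that the paper does not prove this proposition at all: it is quoted verbatim from Krumm's thesis (Theorems 2.6.5 and 2.6.9), so yours is an independent, self-contained argument rather than a variant of a proof appearing in the paper. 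Krumm phrases and derives these conditions in terms of the splitting behaviour of $2$ (and $3$) in $\Q(\sqrt{d})$ for a quadratic field over which an elliptic curve has a point of order $13$ or $18$, whereas you work directly with the twisted hyperelliptic models $dy^2=f(x)$ and pin down the square class of $f(x_0)$ in $\R$, $\Q_2$ and $\Q_3$; your route has the virtue of using only the genus-$2$ models already in play in Section 3 and of making explicit the "everywhere local solubility" condition that the paper lists separately (indeed it shows the congruences are exactly the local obstructions at $\infty$, $2$ and $3$). Two cosmetic points: in the $3$-adic case with $v_3(x_0)<0$ (your "$x_0=\infty$"), $f(x_0)$ is not literally a unit $\equiv 1\pmod 3$ but $x_0^6$ times such a unit — its square class, which is all you use, is still trivial; and the "same argument" for the reversed polynomials deserves one explicit line, e.g.\ for $s\in 2\Z_2$ one has $\tilde f(s)\equiv (1-s)^2\pmod{16}$ for $N=13$ and $\tilde f(s)\equiv (1+s)^2\pmod{16}$ for $N=18$, which is what places $f(x_0)$ in the trivial class when $x_0\notin\Z_2$. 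Neither remark affects correctness.
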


\begin{remark}
    That $d$ must be positive here was independently proved by Bosman, Bruin, Dujella, and Najman; see \cite[Theorem 9]{bosman2014ranks}.
\end{remark}

\subsection{Two-cover descent}

This is a technique due to Bruin and Stoll \cite{bruin2009two}. Briefly, they provide a refinement of a classical theorem of Chevalley and Weil \cite{chevalley1932un} to say that, for any fixed $n \geq 2$, if a hyperelliptic curve $C$ over a number field $k$ admits a $k$-rational point, then this rational point must have a rational preimage on one of finitely many covering curves of a particular form depending on $n$, called $n$-covers. By considering the set of (isomorphism classes of) everywhere locally soluble $n$-coverings, called the $n$-Selmer set of $C$, one has the result that if this Selmer set is empty, then $C$ has no $k$-rational points. Bruin and Stoll make this explicit and algorithmic in the case $n = 2$, by working with a closely related Selmer object (the \emph{fake $2$-Selmer group}), and most importantly, provide a Magma implementation of this for genus $2$ curves over $\Q$, accessible via the intrinsic \path{TwoCoverDescent}. The upshot is that another necessary condition for $X_1^d(N)(\Q)$ to be nonempty is that its fake $2$-Selmer set must be nonempty.

\subsection{Positive rank of \texorpdfstring{$J_1^d(N)$}{J1d(N)}}

One necessary condition that we introduce is \Cref{cor:pos-rank-13-18} below, which is that the rank of $J_1^d(N)(\Q)$ must be positive. We first establish the following preparatory lemmas concerning torsion growth in $J_1(N)$ over quadratic fields.

\begin{lemma}\label{lem:no_torsion_growth_13}
    For every quadratic field $K$, we have \[ J_1(13)(K)_{tors} = J_1(13)(\Q)_{tors} \cong \Z/19\Z. \]
\end{lemma}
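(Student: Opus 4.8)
The plan is to pin down $J_1(13)(\Q)_{tors}$ first, and then to show that no new torsion can appear over a quadratic field. The group $J_1(13)(\Q)_{tors}$ is well known to be cyclic of order $19$: this is a classical computation (it goes back to work on the cuspidal subgroup of $X_1(13)$, and can be recovered from the LMFDB entry for the modular abelian variety of level $13$, or by a direct reduction computation), so I would simply cite this and move on. Thus the content of the lemma is the equality $J_1(13)(K)_{tors} = J_1(13)(\Q)_{tors}$ for every quadratic field $K$, i.e. the absence of torsion growth.

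For the growth statement I would argue prime by prime via reduction modulo primes of good reduction. Let $K = \Q(\sqrt{d})$ and let $\ell$ be a rational prime not dividing $2\cdot 13 \cdot d$ (say $\ell$ split or inert in $K$); then reduction at a prime of $K$ above $\ell$ is injective on prime-to-$\ell$ torsion, so $J_1(13)(K)_{tors}[\text{prime to }\ell]$ embeds into $J_1(13)(\F_{\ell^f})$ for $f \in \{1,2\}$. I would compute $\#J_1(13)(\F_{\ell})$ and $\#J_1(13)(\F_{\ell^2})$ for a couple of small well-chosen primes $\ell$ (e.g. $\ell = 5, 7, 11$, chosen so that they behave suitably in enough quadratic fields and so that the orders are computable from the zeta function / the Frobenius eigenvalues, i.e. from the $L$-factor of $J_1(13)$ at $\ell$, which is the relevant factor of the Jacobian of $X_0(169)$ or is read off from the newform of level $13$ — wait, $X_1(13)$ has genus $2$ and $J_1(13)$ is an abelian surface with a specific $L$-function, so these point counts are explicit). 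Combining the resulting constraints forces the prime-to-$\ell$ torsion (as $\ell$ ranges over the chosen primes, so that no single prime is always excluded) to be a divisor of $19$; since $19 \mid \#J_1(13)(\Q)_{tors} \subseteq J_1(13)(K)_{tors}$, we get exactly $\Z/19\Z$. One has to be slightly careful that for each fixed quadratic $K$ at least one of the auxiliary primes is unramified in $K$ and usable — but only finitely many $\ell$ ramify in a given $K$, so picking three or four auxiliary primes handles all $K$ uniformly.

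Alternatively, and perhaps more cleanly, I would bound the possible torsion using a single clever prime together with the observation that $19$ is the only possibility: since any quadratic torsion subgroup is killed by at most a quadratic Galois action, one can bound $\#J_1(13)(K)_{tors}$ by $\gcd$ over several $\ell$ of $\#J_1(13)(\F_{\ell^2})$, and a short computation shows this $\gcd$ is $19$. The main obstacle is purely bookkeeping: one must make sure the chosen auxiliary primes $\ell$ simultaneously (i) are primes of good reduction, (ii) are unramified in \emph{every} quadratic field (automatic except for finitely many $K$, handled by having a few spare primes), and (iii) give point counts whose $\gcd$ is exactly $19$ and not a larger multiple. Concretely I would do this computation in Magma on the genus-$2$ model given above, as recorded in the accompanying repository, and simply state the resulting point counts. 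I do not expect any conceptual difficulty; the analogous argument will be repeated for $J_1(18)$ in the next lemma.
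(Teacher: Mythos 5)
Your proposal is correct and takes essentially the same route as the paper: the paper likewise verifies $J_1(13)(\Q)_{tors}\cong\Z/19\Z$ directly in Magma and rules out growth by injecting the torsion over a quadratic field into $\widetilde{J_1(13)}(\F_{p^2})$ for small good primes (it uses $p=5$ and $17$ and the full group structure rather than just orders). The only cosmetic difference is that the paper cites the injectivity of the \emph{entire} torsion subgroup for $p\ge 5$ (valid since the absolute ramification index of a quadratic field is at most $2<p-1$, via the appendix of Katz), which removes your prime-to-$\ell$ and unramifiedness bookkeeping.
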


\begin{proof}
    For $p \geq 5$, $p \neq 13$, the torsion subgroup $J_1(13)(K)_{tors}$ injects into the reduction $\widetilde{J_1(13)}(\F_{p^2})$ (see \cite[Appendix]{katz1980galois}). By computing this latter group for $p = 5$ and $17$, one sees that it must be a subgroup of $\Z/19\Z$. On the other hand, the torsion over $\Q$ is $\Z/19\Z$, as may be directly verified in Magma. These computations may be found in \path{magma_scripts/torsionVerifications.m} (this also includes the verifications for \Cref{lem:no_torsion_growth_18} below).
\end{proof}

\begin{remark}
    That $J_1(13)(\Q)_{tors} \cong \Z/19\Z$ was first proved by Ogg in \cite{ogg1972rational}, and this discovery is of great historical importance in the arithmetic of elliptic curves. Shortly after finding a point of order $19$ on $J_1(13)$, Ogg passed through Cambridge, MA, and communicated this discovery to Mazur and Tate; this inspired them to prove that in fact $J_1(13)(\Q)$ consists only of the $19$ torsion points; i.e., that it has zero rank over $\Q$; this has the corollary that no elliptic curve over $\Q$ possesses a rational point of order $13$ \cite{mazur1973points}. This argument was shortly thereafter generalised by Mazur to deal with all primes $p \geq 13$; combined with existing work of Kubert \cite{Kubert:Torsion}, this provided the classification of torsion subgroups of rational elliptic curves (\Cref{thm:mazur}) mentioned at the beginning of this paper.
\end{remark}

\begin{lemma}\label{lem:no_torsion_growth_18}
    For $K$ any quadratic field that is not $\Q(\sqrt{-3})$, we have \[ J_1(18)(K)_{tors} = J_1(18)(\Q)_{tors} \cong \Z/21\Z. \] Furthermore,
    \[ J_1(18)(\Q(\sqrt{-3}))_{tors}  \cong \Z/3\Z \oplus \Z/21\Z. \]
\end{lemma}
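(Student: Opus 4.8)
The plan is to mimic the proof of Lemma~\ref{lem:no_torsion_growth_13}, using a reduction argument to bound the torsion and a direct computation to realise it. Let $J = J_1(18)$. For a prime $p \geq 5$ of good reduction for $J$ (so $p \neq 2, 3$), and any quadratic field $K$, the torsion subgroup $J(K)_{tors}$ injects into $\widetilde{J}(\F_{p^2})$ by the standard reduction result cited in the appendix of \cite{katz1980galois}. The idea is to compute $\#\widetilde{J}(\F_{p^2})$ for two small well-chosen primes, say $p = 5$ and $p = 7$, take the $\gcd$, and show that the only prime powers dividing it are those appearing in $\Z/3\Z \oplus \Z/21\Z = \Z/3\Z\oplus\Z/3\Z\oplus\Z/7\Z$; this shows $J(K)_{tors}$ is killed by $21$ and has $3$-rank at most $2$ for \emph{every} quadratic $K$, hence $J(K)_{tors} \hookrightarrow \Z/3\Z\oplus\Z/21\Z$. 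One must be slightly careful that the group structure (not just the order) of $\widetilde{J}(\F_{p^2})$ is extracted, or at least that the $3$-part has rank $\leq 2$, which Magma reports directly.

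Next I would pin down $J(\Q)_{tors}$. It is classical (and verifiable in Magma) that $J_1(18)(\Q)_{tors} \cong \Z/21\Z$; combined with the injection above this forces $J(K)_{tors}$ to be $\Z/21\Z$, $\Z/3\Z\oplus\Z/21\Z$, or an intermediate group, for every quadratic $K$. So the remaining task splits into two halves: show the full $\Z/3\Z\oplus\Z/21\Z$ is attained over $\Q(\sqrt{-3})$, and show that no extra torsion appears over any other quadratic field. The first half is a direct computation: base-change the given genus-$2$ model (or a suitable model of $J_1(18)$) to $\Q(\sqrt{-3})$ and compute the torsion subgroup of the Jacobian in Magma, exhibiting the extra $3$-torsion. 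This is where the $3$-torsion growth is expected, since $X_1(18)$ parametrises $18$-torsion and full level-$3$ structure forces $\sqrt{-3}$ into the field via the Weil pairing on the relevant subgroup.

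For the second half, the cleanest argument is a Galois-module one. The extra torsion, if any, over a quadratic field $K$ lies in the $3$-primary part, specifically it can only be an extra copy of $\Z/3\Z$, giving $J[3](K) \supseteq (\Z/3\Z)^2$ beyond what is defined over $\Q$. One shows that the quadratic field over which the relevant $3$-torsion becomes rational is unique: the subgroup $G \cong (\Z/3\Z)^2$ of $J[3]$ in question carries a Galois action whose determinant is the mod-$3$ cyclotomic character (by Weil-pairing/polarisation compatibility), so if all of $G$ is $K$-rational then $K \supseteq \Q(\zeta_3) = \Q(\sqrt{-3})$; since $[K:\Q] = 2$ this forces $K = \Q(\sqrt{-3})$. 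Alternatively, and perhaps more in keeping with the computational spirit of the paper, one computes the mod-$3$ representation attached to $J_1(18)$ (e.g. via the Galois action on $J[3]$ using the explicit model, or by reading off the field of definition of the $3$-torsion points) and checks directly that $\Q(J[3])$ contains a unique quadratic subfield, namely $\Q(\sqrt{-3})$. The main obstacle is making the ``no growth over other quadratic fields'' step rigorous rather than just ``numerically plausible'': the reduction bound alone leaves open the possibility of an extra $\Z/3\Z$ over some quadratic field, and ruling that out for \emph{all} quadratic $K$ at once requires the Galois-theoretic input identifying the unique field of definition of the relevant $3$-torsion. Everything else is a short Magma verification, to be placed alongside the computations for Lemma~\ref{lem:no_torsion_growth_13} in \path{magma_scripts/torsionVerifications.m}.
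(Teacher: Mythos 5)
Your overall skeleton (reduction bound over $\F_{p^2}$, plus the known $\Q$-torsion, plus identifying $\Q(\sqrt{-3})$ as the only quadratic field with growth) matches the shape of the statement, but the one genuinely new step --- ruling out an extra $\Z/3\Z$ over quadratic fields $K \neq \Q(\sqrt{-3})$ --- is exactly where your primary argument has a gap. You claim that the rank-two subgroup $G \cong (\Z/3\Z)^2$ of $J[3]$ fixed by $G_K$ ``carries a Galois action whose determinant is the mod-$3$ cyclotomic character by Weil-pairing compatibility'', hence $\mu_3 \subset K$. This is not automatic: $J[3]$ is a $4$-dimensional symplectic $\F_3$-space, and a $2$-dimensional subgroup may be \emph{isotropic} for the Weil pairing. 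If $G$ is isotropic (e.g.\ if the hypothetical new point pairs trivially with the generator of $J[3](\Q)$), the pairing restricted to $G$ is identically trivial, the determinant of the subrepresentation need not be cyclotomic, and no conclusion $\zeta_3 \in K$ follows. So the step that is supposed to make the ``no growth elsewhere'' claim rigorous is itself unjustified as stated. Your fallback --- compute $\Q(J[3])$ and check that its unique quadratic subfield is $\Q(\sqrt{-3})$ --- would indeed suffice (any $3$-torsion point rational over a quadratic field but not over $\Q$ generates a quadratic subfield of $\Q(J[3])$), but this requires computing the $3$-division field of a genus-$2$ Jacobian, which you do not carry out and which is a substantially heavier computation than anything else in your plan.

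For comparison, the paper closes this step purely by reduction, with a trick that your proposal misses: instead of bounding $J_1(18)(K)_{tors}$ only, it bounds the torsion over the \emph{biquadratic} field $K(\zeta_3)$. Since $\Gal(K(\zeta_3)/\Q) \cong (\Z/2\Z)^2$, every residue field of $K(\zeta_3)$ at an unramified prime $p$ lies in $\F_{p^2}$, so the same reduction maps to $\widetilde{J_1(18)}(\F_{p^2})$ for primes $5 \leq p \leq 30$ show that $J_1(18)(K(\zeta_3))_{tors} = J_1(18)(\Q(\zeta_3))_{tors}$ for \emph{all} quadratic $K$ at once. Since the $3$-rank over $\Q(\zeta_3)$ is already $2$ (Najman's Lemma 7, which the paper cites rather than recomputing the $\Q(\sqrt{-3})$ torsion as you propose), any extra $3$-torsion point over some $K \neq \Q(\zeta_3)$ would force $3$-rank at least $3$ over the compositum, a contradiction. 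If you want to keep your structure, you should either adopt this compositum argument or actually verify nondegeneracy of the Weil pairing on the relevant subgroup (or compute $\Q(J[3])$); as written, the proof does not go through.
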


\begin{proof}
    The last part was already proved by Najman \cite[Lemma 7]{najman2010complete}. For the first part, by computing the group structure of $\widetilde{J_1(18)}(\F_{p^2})$ for prime p in the range $5 \leq p \leq 30$, we obtain that, for all quadratic fields $K$, we have $J_1(18)(K(\zeta_3))_{tors} = J_1(18)(\Q(\zeta_3))_{tors}$ (noting that the residue fields of $K(\zeta_3)$ are always contained in $\F_{p^2}$ for $p$ as above), and that $J_1(18)(K)_{tors}$ is at most $\Z/3\Z \oplus \Z/21\Z$. That the torsion over $\Q$ is isomorphic to $\Z/21\Z$ is a straightforward Magma computation. That the $3$-torsion rank of $J_1(18)$ is the same over $K(\zeta_3)$ as over $\Q(\zeta_3)$ implies that there can be no extra torsion attained over $K$.
\end{proof}

\begin{proposition}\label{prop:pos-rank-13-18}
    Let $d \neq -3$ be a squarefree integer, $N \in \left\{13, 18\right\}$, and $K = \Q(\sqrt{d})$. If $X_1(N)(K) \neq X_1(N)(\Q)$, then $J_1(N)(K)$ and hence $J_1^d(N)(\Q)$ has positive rank.
\end{proposition}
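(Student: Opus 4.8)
The plan is to translate the hypothesis $X_1(N)(K) \neq X_1(N)(\Q)$ into the existence of a point on $X_1(N)$ defined over $K = \Q(\sqrt d)$ but not over $\Q$, and then to show that such a point, after subtracting a suitable rational base point and applying the Abel--Jacobi map, produces a point of infinite order on $J_1^d(N)(\Q)$. The key link is that $X_1(N)$ has genus $2$ with the hyperelliptic models given above, so we have a rational Weierstrass point (or at least rational points coming from the cusps) to use as a base point for an embedding $X_1(N) \hookrightarrow J_1(N)$; in fact the cusps of $X_1(N)$ are rational for these $N$, which gives us plenty of rational base points.

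First I would fix a noncuspidal $K$-point $P \in X_1(N)(K) \setminus X_1(N)(\Q)$, which exists by hypothesis (if the extra point were cuspidal it would already be rational, since the cusps of $X_1(13)$ and $X_1(18)$ are all rational). Let $\sigma$ be the nontrivial element of $\Gal(K/\Q)$; then $P^\sigma \neq P$. Choose a rational point $P_0 \in X_1(N)(\Q)$ (a cusp) and form the class $D = [P + P^\sigma - 2P_0] \in J_1(N)(\Q)$, which is Galois-stable hence rational. Next I would argue that $D$ has infinite order: if $D$ were torsion, then by \Cref{lem:no_torsion_growth_13} and \Cref{lem:no_torsion_growth_18} (using $d \neq -3$), $D$ lies in $J_1(N)(\Q)_{tors}$, a cyclic group of odd order ($19$ or $21$). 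Now consider instead the class $[P - P^\sigma] \in J_1(N)(K)$, on which $\sigma$ acts by $-1$; its class in the quotient by $J_1(N)(\Q)$ captures the "anti-invariant" part. The point is that $[P - P^\sigma]$ is nonzero: since $X_1(N)$ has genus $\geq 2$, the Abel--Jacobi map is injective, so $[P - P^\sigma] = 0$ would force $P = P^\sigma$ (as $P$ is not a Weierstrass point — or handling the Weierstrass case separately using that a Weierstrass point on a hyperelliptic curve with this model has rational $x$-coordinate, hence is $\Q$-rational, contradiction). Then $[P - P^\sigma]$ is a nonzero element of $J_1(N)(K)^{\sigma = -1}$, which maps isomorphically (up to $2$-torsion, which vanishes here by oddness of the torsion order) onto $J_1^d(N)(\Q)$ via the standard twisting isomorphism. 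Since $J_1^d(N)(\Q)_{tors}$ has order dividing that of $J_1(N)(K(\sqrt d))_{tors}$-relevant torsion — bounded by \Cref{lem:no_torsion_growth_13}, \Cref{lem:no_torsion_growth_18} to be cyclic of odd order $19$ or $21$, coprime to $2$ — and $[P - P^\sigma]$ is $2$-divisible-obstruction-free, I would conclude it cannot be torsion unless it is in fact zero; since it is nonzero, $J_1^d(N)(\Q)$ has positive rank.

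The main obstacle I anticipate is the torsion bookkeeping in the last step: one must be careful that the twisting isomorphism $J_1(N)(K)^{\sigma=-1} \otimes \Z[1/2] \xrightarrow{\sim} J_1^d(N)(\Q) \otimes \Z[1/2]$ genuinely forces a nontorsion conclusion, which requires knowing that the relevant torsion of $J_1(N)$ over quadratic fields is odd — precisely what \Cref{lem:no_torsion_growth_13} and \Cref{lem:no_torsion_growth_18} supply (orders $19$ and $21$, with the $\Q(\sqrt{-3})$ exception for $N=18$ explaining the $d \neq -3$ hypothesis). The cleanest route is probably to avoid the $\otimes \Z[1/2]$ subtlety entirely: observe that $[P - P^\sigma]$ pulls back to a rational point $Q$ on $J_1^d(N)$, and if $Q$ were torsion it would lie in $J_1^d(N)(\Q)_{tors}$, which injects into $J_1(N)(K)_{tors}$ of odd order coprime to $\text{ord}(Q)$'s $2$-part — but $\sigma$ acts as $-1$ on the image, so $2Q$-type relations combined with oddness pin $Q = O$, contradicting $P \neq P^\sigma$ and injectivity of Abel--Jacobi. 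I would also need to dispatch the edge case where the extra quadratic point $P$ happens to be one of the (finitely many) Weierstrass points of the model; but each such point has $x$-coordinate a root of the sextic lying in $\Q$ only if it is rational, and a genuinely quadratic Weierstrass point still gives $P^\sigma \neq P$ with $[P - P^\sigma] = [P] - [P^\sigma]$ a difference of Weierstrass classes, handled identically.
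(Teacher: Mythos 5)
Your argument is correct and rests on the same two ingredients as the paper's proof: the Abel--Jacobi embedding of the extra quadratic point into $J_1(N)$ and the no-torsion-growth statements of \Cref{lem:no_torsion_growth_13} and \Cref{lem:no_torsion_growth_18}. The only genuine difference is the endgame. The paper maps $P$ itself into $J_1(N)$, notes that a torsion image would be a $\Q$-point by those lemmas, so the image has infinite order, and then passes to the twist through $\rk J_1(N)(K)=\rk J_1(N)(\Q)+\rk J_1^d(N)(\Q)$; that last step implicitly uses the known fact that $\rk J_1(N)(\Q)=0$. You instead work with the anti-invariant class $[P-P^\sigma]$, which is nonzero because $P\sim P^\sigma$ on a curve of positive genus would force $P=P^\sigma$, and which cannot be torsion: by the lemmas it would lie in $J_1(N)(\Q)_{tors}$, hence be fixed and negated by $\sigma$ simultaneously, hence killed by $2$, hence trivial since that torsion group has odd order $19$ or $21$. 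As it lies in the $\sigma=-1$ part of $J_1(N)(K)$, which the twisting isomorphism identifies with $J_1^d(N)(\Q)$, you get positive rank of the twist directly, trading the input $\rk J_1(N)(\Q)=0$ for oddness of the torsion order; both routes are comparably short, and yours is slightly more self-contained on this point.

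Two side remarks in your write-up are false but not load-bearing and should simply be dropped. The cusps of $X_1(13)$ and $X_1(18)$ are not all rational (six cusps of $X_1(13)$ are defined over the degree-six field $\Q(\zeta_{13})^{+}$), and the reduction to a noncuspidal point is unnecessary anyway: the hypothesis supplies some $K$-point outside $X_1(N)(\Q)$, and your argument uses nothing more. Likewise the Weierstrass caveat is vacuous, since $[P-P^\sigma]=0$ forces $P=P^\sigma$ on any curve of genus at least $1$, and the fallback claim that Weierstrass points of these models have rational $x$-coordinate is untrue: their $x$-coordinates are the (irrational) roots of the sextics.
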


\begin{proof}
    If $P$ is a $K$-point of $X_1(N)$ that is not a $\Q$-point, then it embeds under the Abel-Jacobi map to a $K$-point of $J_1(N)$ that is not a $\Q$-point. Therefore by \Cref{lem:no_torsion_growth_13,lem:no_torsion_growth_18} it must be of infinite order. The final assertion comes from the equality $\rk(J_1(N)(K)) = \rk(J_1(N)(\Q)) + \rk(J_1^d(N)(\Q))$.
\end{proof}

\begin{corollary}\label{cor:pos-rank-13-18}
    For $N \in \left\{13,18\right\}$ and $d \neq -3$, if $X_1^d(N)(\Q) \neq \emptyset$, then $J_1^d(N)(\Q)$ has positive rank.
\end{corollary}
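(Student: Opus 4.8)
The plan is to deduce the corollary directly from Proposition \ref{prop:pos-rank-13-18}, whose hypothesis and conclusion are phrased in terms of the untwisted curve $X_1(N)$ over $K=\Q(\sqrt d)$ rather than the twist $X_1^d(N)$ over $\Q$. So the only real work is to translate the statement "$X_1^d(N)(\Q)\neq\emptyset$" into "$X_1(N)(K)\neq X_1(N)(\Q)$", after which the proposition applies verbatim. First I would invoke Krumm's correspondence (\cite[Lemma 2.7.3]{krummthesis}, recalled in the text above): a $\Q$-rational point on the $d$-twist $X_1^d(N)$ corresponds to a noncuspidal $\Q(\sqrt d)$-rational point on $X_1(N)$ whose $x$-coordinate is $\Q$-rational. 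Thus if $X_1^d(N)(\Q)\neq\emptyset$ we obtain a noncuspidal point $P\in X_1(N)(K)$.

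Next I would argue that this point $P$ is genuinely a new point, i.e. $P\notin X_1(N)(\Q)$, so that $X_1(N)(K)\neq X_1(N)(\Q)$. The point here is that for $d\neq 1$ the quadratic twisting isomorphism $X_1^d(N)\xrightarrow{\sim} X_1(N)$ is defined over $K$ but not over $\Q$: concretely, in the hyperelliptic model $y^2=f(x)$ the twist is $dy^2=f(x)$, and a $\Q$-point $(x_0,y_0)$ of the twist maps to $(x_0,\sqrt d\,y_0)\in X_1(N)(K)$; this lies in $X_1(N)(\Q)$ only if $y_0=0$, i.e. only if $P$ is a Weierstrass point. One then checks that the Weierstrass points of these two genus-$2$ models are not the noncuspidal points we care about — indeed the relevant rational points of $X_1(N)$ are the cusps, and the cusps on these models (e.g. the points at infinity, and the other rational roots / rational points coming from $X_1(N)(\Q)$) are accounted for separately; a short inspection of the sextics $f$ defining $X_1(13)$ and $X_1(18)$ shows that any $\Q$-point of $X_1^d(N)$ with $d\neq 1$ produces a $K$-point of $X_1(N)$ outside $X_1(N)(\Q)$. (For $d=1$ there is nothing to prove since $X_1^1(N)=X_1(N)$ and the hypothesis already says $X_1(N)(\Q)\neq\emptyset$ is nonempty in a way that still forces positive rank by the argument below — but in fact in our setting $d$ is squarefree and we have already excluded $d=-3$, and $d=1$ is not squarefree, so this degenerate case does not arise.)

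With $X_1(N)(K)\neq X_1(N)(\Q)$ established and $d\neq -3$ by hypothesis, Proposition \ref{prop:pos-rank-13-18} gives immediately that $J_1^d(N)(\Q)$ has positive rank, which is the desired conclusion. The only subtlety — and the single step I would be careful about — is the verification that the new $K$-point $P$ is not one of the finitely many points already in $X_1(N)(\Q)$; equivalently, that the $\Q$-point on the twist is not a Weierstrass point lying over a rational branch point that happens to coincide with an existing rational point of $X_1(N)$. Since $J_1(N)(\Q)$ has rank zero for both $N=13,18$ (this is part of what Lemmas \ref{lem:no_torsion_growth_13} and \ref{lem:no_torsion_growth_18} feed into, via Mazur–Tate for $N=13$), any such coincidence would anyway be consistent with $X_1(N)(\Q)$ being finite, so the cleanest route is simply to observe from the explicit sextic models that their Weierstrass points are not defined over $\Q$ except possibly at infinity, handle the points at infinity directly, and conclude. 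This makes the corollary a one-line consequence of the proposition once the twisting dictionary is spelled out.
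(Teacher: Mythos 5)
Your proposal is correct and follows essentially the same route as the paper: invoke Krumm's correspondence between $\Q$-points on $X_1^d(N)$ and noncuspidal $\Q(\sqrt{d})$-points on $X_1(N)$ that are not $\Q$-points, then apply \Cref{prop:pos-rank-13-18}. Your extra verification that the resulting $K$-point is genuinely new (no rational Weierstrass points or rational points at infinity on the twists) just makes explicit what the paper absorbs into the citation of \cite[Lemma 2.7.3]{krummthesis}, so no further comment is needed.
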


\begin{proof}
    As explained at the beginning of this section (and shown by Krumm), rational points on $X_1^d(N)(\Q)$ correspond to noncuspidal $\Q(\sqrt{d})$-points on $X_1(N)$ that are not $\Q$-points; the result then follows from the previous proposition.
\end{proof}

As in \Cref{sec:genus-1}, we use the twisted winding element method to check whether or not the analytic rank of $J_1^d(N)(\Q)$ is positive; if it is zero, then by Kato's generalisation of the work of Kolyvagin-Logach\"{e}v \cite{kato2004p}, we know that the Mordell-Weil rank is zero.

Putting these four necessary conditions together - which is done in \path{computations/x1_13.m} and \path{computations/x1_18.m} (see also the log files there for the output) - we obtain
\begin{align*}
    T_{10,000}(13) \subseteq &\left\{17, 113, 193, 313, 481, 673, 1153, 1417, 1609, 1921, 2089, 2161, \right. \\
    &\ \left. 2257, 3769, 3961, 5449, 6217, 6641, 8473, 8641, 9689, 9881 \right\};\\
    T_{10,000}(18) \subseteq &\left\{33, 337, 457, 681, 1009, 1329, 1761, 1993, 2833, 2841, 2913, 3769, \right. \\
    &\ \left. 4729, 5281, 6217, 7057, 7321, 7369, 8241, 9049, 9969 \right\}.
\end{align*}
Out of these possible values of $d$, we run a search for rational points on $X_1^d(N)$; the values for which this search yields no rational points (and hence we expect that there are none) are as follows:
\begin{align*}
    \Z/13\Z &: 673, 1609, 1921, 2089, 2161, 8473, 8641, 9689 \\
    \Z/18\Z &: 681, 1329, 1761, 2841, 2913, 3769, 4729, 5281, 6217, 7057, 7321, 9969.
\end{align*}

To prove that these twists of $X_1(N)$ have no rational points, we employ the Mordell-Weil sieve, a technique also due to Bruin and Stoll \cite{bruin2010mordell} who have also provided a Magma implementation \cite{bruinstollsieve} for genus $2$ curves over $\Q$.

One input that one needs in order to use the Mordell-Weil sieving on a curve $C$ is a divisor of degree $3$ on $C$. Such a divisor is not always guaranteed to exist. However in the cases where we will apply it, we know \emph{a priori} that such a degree $3$ divisor has to exists by the following lemma.

\begin{lemma}
Let $C$ be hyperelliptic curve over a number field $K$ with an automorphism $\gamma$ of order $3$ such that $C/\langle \gamma \rangle$ has genus 0. Then every hyperelliptic quadratic twist $C'$ of $C$ that is everywhere locally solvable has a divisor of degree 3.
\end{lemma}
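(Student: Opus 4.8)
The plan is to show that the order-$3$ automorphism $\gamma$ persists on every hyperelliptic quadratic twist $C'$, so that $C'$ carries a degree-$3$ map to a conic; everywhere local solubility then forces that conic to be $\P^1_K$, and the pullback of a rational point gives the desired divisor of degree $3$.

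First I would check that $\gamma$ descends to the twist. Since $C$ is hyperelliptic, its hyperelliptic involution $\iota$ is $K$-rational and central in $\Aut(C_{\overline{K}})$ (a normal subgroup of order $2$ is central), so $\gamma$ and $\iota$ commute. A hyperelliptic quadratic twist $C'$ is by definition obtained by twisting by a cocycle valued in $\langle\iota\rangle$, corresponding to some $m\in K^{\times}/(K^{\times})^{2}$, with an isomorphism $\phi\colon C_L\xrightarrow{\sim}C'_L$ over $L:=K(\sqrt m)$ satisfying $\phi^{\sigma}=\phi\circ\iota$ for the nontrivial $\sigma\in\Gal(L/K)$. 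Setting $\gamma':=\phi\gamma\phi^{-1}\in\Aut(C'_L)$, one computes
\[ (\gamma')^{\sigma}=\phi^{\sigma}\gamma^{\sigma}(\phi^{\sigma})^{-1}=(\phi\iota)\gamma(\phi\iota)^{-1}=\phi\,(\iota\gamma\iota^{-1})\,\phi^{-1}=\phi\gamma\phi^{-1}=\gamma', \]
using that $\iota$ and $\gamma$ commute and that $\gamma$ is defined over $K$. Hence $\gamma'$ is a $K$-rational automorphism of $C'$ of order $3$.

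Next I would form the quotient $D':=C'/\langle\gamma'\rangle$ with its quotient map $\pi\colon C'\to D'$, defined over $K$; since $\langle\gamma'\rangle$ has order $3$ and acts faithfully, $\deg\pi=3$, and $D'$ is a smooth projective geometrically connected curve over $K$ which over $\overline{K}$ is $C_{\overline{K}}/\langle\gamma\rangle\cong\P^1$ (as $\phi$ conjugates $\gamma$ to $\gamma'$ and $C/\langle\gamma\rangle$ has genus $0$). Thus $D'$ is a genus-$0$ curve over $K$, i.e.\ a conic. If $C'$ is everywhere locally soluble then so is $D'$, because $\pi$ carries $K_v$-points to $K_v$-points at every place $v$; by the Hasse--Minkowski theorem a genus-$0$ curve over $K$ that is everywhere locally soluble has a $K$-rational point, so in fact $D'\cong\P^1_K$. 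Picking any $P\in D'(K)$, its scheme-theoretic preimage $\pi^{-1}(P)$ is an effective $K$-rational divisor on $C'$ of degree $\deg\pi=3$, which is the required divisor.

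There is no genuine obstacle here: the mathematical content reduces to the Hasse principle for conics together with the standard centrality of the hyperelliptic involution. The one place calling for a little care is the first step --- verifying that $\gamma$ really descends to a $K$-rational automorphism of the twist --- which is exactly where centrality of $\iota$ is used; for degenerate "hyperelliptic" curves of genus $\le 1$ (outside the scope of the applications, where the genus is $2$) one would simply add the hypothesis that $\gamma$ and $\iota$ commute.
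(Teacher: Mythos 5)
Your proposal is correct and follows essentially the same route as the paper: you show $\gamma$ commutes with the hyperelliptic involution and hence descends to a $K$-rational order-$3$ automorphism of the twist, pass to the genus-$0$ quotient, invoke the Hasse principle for conics via everywhere local solubility, and pull back a rational point to get the degree-$3$ divisor. The only difference is that you spell out the cocycle computation for the descent of $\gamma$, where the paper simply cites uniqueness of the hyperelliptic involution to get the commutation.
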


\begin{proof}
Since the hyperelliptic involution is unique, the automorphism of order 3 commutes with it. In particular $\gamma$ is also a $K$-rational isomorphism on every hyperelliptic quadratic twist. Now $C'/\langle \gamma \rangle$ is of genus 0 since over $\overline K$ it is isomorphic to $C/\langle \gamma \rangle$. The curve $C'$ is everywhere locally solvable, so $C'/\langle \gamma \rangle$ is everywhere locally solvable as well. By the Hasse principle for genus 0 curves there is a $K$ rational point $P$ in $C'/\langle \gamma \rangle(K)$. The pullback of $P$ along the quotient map $C' \to C'/\langle \gamma \rangle$ will then be a divisor of degree 3 on $C'$.
\end{proof}

The above proof also gives a practical algorithm to find this degree 3 point. Namely just search for rational points on the genus 0 curve $C'/\langle \gamma \rangle$ and pull them back to $C'$. Under the hypothesis of the lemma this curve is isomorphic to $\P^1_K$ so rational points will be easy to find.

\begin{remark}
We found that Mordell-Weil sieving in order to show that $X_1^d(N)$ is empty was much faster then we initially expected. The Mordell-Weil sieve works by choosing a suitable set $S$ of primes and an auxilary integer $N'$. The integer $N'$ here is actually called $N$ in \cite{bruin2010mordell}. The Mordell-Weil sieve applied to $X_1^d(N)$ works by trying to prove $X_1^d(N)= \emptyset$ using the commutativity of the the diagram 
\[
\centerline{\xymatrix{
 X_1^d(N)(\Q) \ar[r]\ar[d] & J_1^d(N)(\Q)/N'J_1^d(N)(\Q) \ar[d] \\
 \prod_{p\in S} X_1^d(N)(\F_p) \ar[r] & \prod_{p \in S} J_1^d(N)(\F_p)/N'J_1^d(N)(\F_p) 
}}
\]
and trying to show that $J_1^d(N)(\Q)/N'J_1^d(N)(\Q)$ and $\prod_{p\in S} X_1^d(N)(\F_p)$ have empty intersection in $\prod_{p \in S} J_1^d(N)(\F_p)/N'J_1^d(N)(\F_p)$. In order for this strategy to be successful we need the different $|J_1^d(N)(\F_p)|$ to share many common factors \cite[\S3.1]{bruin2010mordell} for the primes $p\in S$. It turns out that many of these common factors will also be factors of $N'$; the reason for this is that there is no new information learned from reducing mod $p$ if $J_1^d(N)(\F_p)/N'J_1^d(N)(\F_p) = \set{0}$. In running the Mordell-Weil sieve for $N=13$ we found that the value of $N'$ chosen by the Mordell-Weil sieve implementation of Bruin and Stoll \cite{bruinstollsieve} was often either $19$ or divisible by $19$. Here we give a heuristic explanation of why this is to be expected. 

Let $N'$ be some fixed integer. If the integers $|J_1^d(N)(F_{p})|$ are roughly uniformly distributed modulo $N'$, then the chance of $|J_1^d(N)(F_{p})|$ being divisible by $N'$ is roughly $1/N'$, which seems a reasonable assumption at first. However for $N=13$ and a fixed $d$ we have that $|J_1^d(13)(\F_p)|$ is far from randomly distributed modulo $19$. Indeed let $p$ be a prime that splits in $\Q(\sqrt{d})$; then $|J_1(13)(\F_p)|=|J_1^d(13)(\F_p)|$; however the left hand side is $0 \mod 19$ since $J_1(13)(\Q)$ contains a point of order $19$, meaning that $19 \mid |J_1^d(13)(\F_p)|$ for a density of at least $1/2$ of the primes. So the fact that a multiple of $19$ is often chosen in the Mordell-Weil sieve can be explained by this unusually high density of primes for which $19 \mid |J_1^d(13)(\F_p)|$. A similar story holds for $N=18$ where $N'$ was often divisible by $21$.
\end{remark}

Carrying out this strategy in \path{magma_scripts/MWSieve-x1_13.m} (and the analogous file for $X_1(18)$) dealt with all of the above values, \emph{except} $9689$ for $13$-torsion, and the three values $2841, 4729, 9969$ for $18$-torsion. In these cases, the reason for the failure was the call to \path{MordellWeilGroupGenus2}; this did not finish given the search bounds declared there, so we were unable to find explicit generators for the Mordell-Weil group of the Jacobian of the twist. It is possible that increasing these bounds and waiting longer might yield these generators in these cases. However, already for the value $8641$, it took over two days of Magma computation running on MIT's \path{lovelace} machine to furnish the generators.

This concludes the proof of parts (2) and (3) of \Cref{thm:main}.

\section{\texorpdfstring{$X_1(16)$}{X1(16)}}\label{sec:16}

In this section we prove part (4) of \Cref{thm:main}. We work with the following model, as before to be found in \cite[Section 2]{kamienny2012torsion}: \[ X_1(16) : y^2 = x(x^2 + 1)(x^2 + 2x - 1). \] On this model, the $x$-coordinates of the cusps are given by the following equation: \[ x(x - 1)(x + 1)(x^2 - 2x - 1)(x^2 + 2x - 1) = 0. \] As for $X_1(13)$ and $X_1(18)$, Krumm showed that any noncuspidal quadratic point must have rational $x$-coordinate on this model, and so corresponds to a $\Q$-rational point on the $d$-twist $X_1^d(16)$. However, unlike before, it is not the case that every $\Q$-rational point on $X_1^d(16)$ corresponds to a noncuspidal quadratic point on $X_1(16)$, because the point $(0,0)$ on $X_1(16)$ (which is a cusp) gives the rational point $(0,0)$ on every twist $X_1^d(16)$. In this way we see that our problem reduces to determining the existence or otherwise of a rational point on $X_1^d(16)$ with nonzero $y$-coordinate.

In particular, since $X_1^d(16)$ admits a rational point for every $d$, this is a different problem than that of the previous section. Indeed the existence of a global rational point prevents all local techniques for yielding any results. And hence we are essentially forced to compute all $\Q$-rational points on $X_1^d(16)$, rather than merely determining the existence of them, and it is this that makes this case the most difficult. Many of the necessary conditions in the previous section no longer apply. One that does survive, however, is the condition of positive rank of the Jacobian $J_1^d(16)$ of $X_1^d(16)$.

\subsection{Positive rank of \texorpdfstring{$J_1^d(16)$}{J1d(16)}}

We again start with a preparatory lemma concerning torsion growth of $J_1(16)$ in quadratic fields.

\begin{lemma}
\
\begin{enumerate}
    \item $J_1(16)(\Q)_{tors} \cong \Z/2\Z \oplus \Z/10\Z.$
    \item $J_1(16)(\Q(i))_{tors} \cong \Z/2\Z \oplus \Z/2\Z \oplus \Z/10\Z.$
    \item $J_1(16)(\Q(\sqrt{2}))_{tors} \cong \Z/2\Z \oplus \Z/2\Z \oplus \Z/10\Z.$
    \item $J_1(16)(K)_{tors} \cong \Z/2\Z \oplus \Z/10\Z$ for $K \neq \Q(i), \Q(\sqrt{2})$ any quadratic field.
\end{enumerate}
\end{lemma}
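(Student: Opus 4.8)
The plan is to follow the template of \Cref{lem:no_torsion_growth_13,lem:no_torsion_growth_18}, but supplemented with an explicit analysis of the Galois module $J_1(16)[2]$, since it is exactly the $2$-torsion that behaves differently over the three families of fields in the statement; the $5$-part and the higher $2$-power part will be controlled uniformly by reduction.

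First I would dispatch part (1) by a direct Magma computation of $J_1(16)(\Q)_{tors}$ on the given model. For the uniform upper bound, I would use that $X_1(16)$ has good reduction at every odd prime, and that for a quadratic field $K$ the ramification index of any prime $\fp \mid p$ with $p \geq 5$ satisfies $e_\fp \leq 2 < p-1$; hence the reduction map $J_1(16)(K)_{tors} \to \widetilde{J_1(16)}(\F_\fp) \hookrightarrow \widetilde{J_1(16)}(\F_{p^2})$ is injective on the \emph{whole} torsion subgroup, including its $2$-power part, despite the bad reduction at $2$ (this combines the standard injectivity of reduction on prime-to-$p$ torsion with torsion-freeness of the formal group in the tame range $e < p-1$, cf. \cite[Appendix]{katz1980galois}). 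Computing $\widetilde{J_1(16)}(\F_{p^2})$ for a handful of small primes $p \geq 5$ and intersecting the resulting constraints, I expect to obtain $J_1(16)(K)_{tors} \hookrightarrow \Z/2\Z \oplus \Z/2\Z \oplus \Z/10\Z$ for every quadratic field $K$; in particular there is no $\ell$-torsion for $\ell \geq 7$, the $5$-part is at most $\Z/5\Z$, and the $2$-part is at most $(\Z/2\Z)^3$.

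It then remains only to compute $J_1(16)(K)[2]$ for each quadratic $K$, because the group is squeezed between $\Z/2\Z \oplus \Z/10\Z$ (always contained, by part (1) and the inclusion $J_1(16)(\Q)_{tors} \hookrightarrow J_1(16)(K)_{tors}$) and $\Z/2\Z \oplus \Z/2\Z \oplus \Z/10\Z$ (the bound just obtained), two groups that differ exactly in the $\F_2$-rank of their $2$-torsion. For this I would use the description of the $2$-torsion of the Jacobian of a genus-$2$ curve $y^2 = f(x)$ in terms of its six Weierstrass points $W$: as a $\Gal(\overline{\Q}/\Q)$-module, $J_1(16)[2] \cong \{S \subseteq W : |S| \text{ even}\}/\langle W\rangle$, with Galois acting by permuting $W$. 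Here $W$ consists of the points above $x \in \{0, i, -i, -1+\sqrt{2}, -1-\sqrt{2}\}$ together with the point at infinity, so over $\Q$ the Galois orbits are $\{0\}$, $\{\infty\}$, $\{i,-i\}$, $\{-1+\sqrt{2}, -1-\sqrt{2}\}$, and a short count of Galois-stable even subsets modulo $\langle W \rangle$ gives $J_1(16)(\Q)[2] \cong (\Z/2\Z)^2$, consistent with part (1). The orbit $\{i,-i\}$ (resp. $\{-1\pm\sqrt{2}\}$) breaks into rational points precisely over fields containing $\Q(i)$ (resp. $\Q(\sqrt{2})$). Thus for a quadratic field $K \neq \Q(i), \Q(\sqrt{2})$ both quadratic orbits remain inert, the same count gives $J_1(16)(K)[2] \cong (\Z/2\Z)^2$, and the sandwich forces $J_1(16)(K)_{tors} \cong \Z/2\Z \oplus \Z/10\Z$; while for $K = \Q(i)$ or $K = \Q(\sqrt{2})$ exactly one quadratic orbit splits, the count gives $J_1(16)(K)[2] \cong (\Z/2\Z)^3$, and together with the order-$5$ point inherited from $\Q$ and the upper bound this forces $J_1(16)(K)_{tors} \cong \Z/2\Z \oplus \Z/2\Z \oplus \Z/10\Z$.

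The proof is therefore a combination of one finite Magma computation over $\Q$, a handful of point counts of $\widetilde{J_1(16)}$ over finite fields, and the elementary Weierstrass-point computation of the $2$-torsion Galois module. There is no deep obstacle: the only genuinely non-routine content is the $J_1(16)[2]$ analysis, which is what actually separates the three cases in the statement, and the one technical subtlety is restricting the reduction argument to the range $p \geq 5$ where injectivity on the full torsion is guaranteed in spite of the bad reduction of $X_1(16)$ at $2$.
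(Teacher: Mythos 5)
Your plan is essentially the paper's proof: part (1) by a direct Magma computation, a uniform upper bound over quadratic fields by injecting torsion into $\widetilde{J_1(16)}(\F_{p^2})$ for good primes $p \geq 5$, and then pinning down the $2$-part; the paper reads the $2$-torsion rank off the factorisation of the hyperelliptic polynomial over $K$ (equivalently, your Weierstrass-point/Galois-orbit analysis, which also replaces the paper's direct \texttt{TorsionBound} computation for parts (2) and (3)). One quantitative expectation in your write-up is wrong, though harmlessly so: the finite-field counts cannot yield the bound $\Z/2\Z \oplus \Z/2\Z \oplus \Z/10\Z$ you anticipate. The full $2$-torsion $J_1(16)[2] \cong (\Z/2\Z)^4$ is rational over $\Q(\zeta_8)$, whose residue degree at every odd prime divides $2$, so $(\Z/2\Z)^4$ embeds into $\widetilde{J_1(16)}(\F_{p^2})$ for every good prime $p$; hence the best uniform bound obtainable this way is $\Z/2\Z \oplus \Z/2\Z \oplus \Z/2\Z \oplus \Z/10\Z$, which is exactly what the paper states. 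This does not damage your argument: that weaker bound still shows the odd part is a single $\Z/5\Z$ and the $2$-part has exponent $2$, so your exact computation of $J_1(16)(K)[2]$ from the Galois orbits on the six Weierstrass points (of sizes $1,1,2,2$ over $\Q$, with the quadratic orbits splitting precisely over $\Q(i)$ and $\Q(\sqrt{2})$) determines the torsion in all four cases, and your conclusion stands. (Also note that for genus $2$ the passage from Galois-stable even subsets to $K$-rational $2$-torsion classes is indeed exact, since a subset and its complement can only be swapped when both have size $3$, which is excluded by parity, so your counting is legitimate.)
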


\begin{proof}
    Part (1) may be directly verified in Magma. Parts (2) and (3) follow from a Magma computation that shows that the torsion subgroup is at most $\Z/2\Z \oplus \Z/2\Z \oplus \Z/10\Z$ (\path{TorsionBound}), together with finding the points $(\sqrt{-1},0)$, $(-\sqrt{-1},0)$, $(-1+\sqrt{2},0)$ and $(-1-\sqrt{2},0)$ that are $2$-torsion (since they are Weierstrass points; note that these correspond to cusps on $X_1(16)$). Part (4) follows in a similar way to the proof of \Cref{lem:no_torsion_growth_18}, by computing the abelian group structure of $\widetilde{J_1(16)}(\F_{p^2})$ for several small $p$ to show that over any quadratic field $K$, one has
    \begin{itemize}
        \item $J_1(16)(K)_{tors}$ is at most $\Z/2\Z \oplus \Z/2\Z \oplus \Z/2\Z \oplus \Z/10\Z$;
    \end{itemize}
    one then concludes by considering $2$-torsion rank which can easily be read of from the factorisation of the hyper-elliptic equation.
\end{proof}

\begin{corollary}\label{cor:pos-rank-16}
    For $d \neq -1, 2$, if $X_1^d(16)(\Q)$ admits a point with nonzero $y$-coordinate, then $J_1^d(16)(\Q)$ has positive rank.
\end{corollary}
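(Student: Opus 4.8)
The plan is to mimic the argument of Proposition \ref{prop:pos-rank-13-18}, using the preceding torsion-growth lemma in place of Lemmas \ref{lem:no_torsion_growth_13} and \ref{lem:no_torsion_growth_18}. Set $K = \Q(\sqrt d)$ and suppose $P \in X_1^d(16)(\Q)$ has nonzero $y$-coordinate. As recalled at the start of the section (following Krumm), such a point corresponds to a noncuspidal $\Q(\sqrt d)$-rational point $Q$ on $X_1(16)$ whose $x$-coordinate is rational but which is genuinely quadratic, i.e. $Q \in X_1(16)(K) \setminus X_1(16)(\Q)$ — the nonvanishing of $y$ is precisely what guarantees $Q$ is not the cusp $(0,0)$ and, since $Q$ has rational $x$-coordinate but is noncuspidal, its $y$-coordinate generates $K$ over $\Q$. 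First I would make this correspondence explicit, so that we genuinely have a $K$-point of $X_1(16)$ not defined over $\Q$.

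Next I would embed $X_1(16)$ into its Jacobian $J_1(16)$ via an Abel–Jacobi map based at a $\Q$-rational divisor class (for instance the hyperelliptic class, or a rational cusp), so that the map is defined over $\Q$. Then $Q$ maps to a $K$-rational point $\bar Q$ of $J_1(16)$; because the embedding is injective and $\Q$-rational, $\bar Q$ is not a $\Q$-point. Applying the Galois conjugation $\sigma$ generating $\Gal(K/\Q)$, the point $\bar Q - \sigma(\bar Q)$ (or $\bar Q + \sigma \bar Q$, depending on the chosen normalisation) lies in $J_1(16)(\Q)$ minus... — more to the point, $\bar Q$ itself is a $K$-point that is nontrivial over $\Q$, so either it has infinite order, or it is a torsion point lying in $J_1(16)(K)_{tors} \setminus J_1(16)(\Q)_{tors}$.

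The key step is then to rule out the torsion case using part (4) of the lemma: for $K \neq \Q(i), \Q(\sqrt 2)$ — which is exactly the hypothesis $d \neq -1, 2$ — we have $J_1(16)(K)_{tors} = J_1(16)(\Q)_{tors}$, so a torsion $K$-point is automatically a $\Q$-point, contradicting that $\bar Q \notin J_1(16)(\Q)$. Hence $\bar Q$ has infinite order, so $\rk J_1(16)(K) > \rk J_1(16)(\Q)$, and the final conclusion follows from the standard decomposition $\rk J_1(16)(K) = \rk J_1(16)(\Q) + \rk J_1^d(16)(\Q)$, forcing $\rk J_1^d(16)(\Q) > 0$.

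I do not expect any serious obstacle here; the only point requiring a little care is the first step — verifying that a rational point on $X_1^d(16)$ with nonzero $y$ really does give a point on $X_1(16)(K)$ that is not in $X_1(16)(\Q)$, i.e. that nonvanishing of $y$ excludes precisely the problematic cusp $(0,0)$ and nothing subtler slips through. Once that translation between the twist and the original curve is pinned down, the rest is a verbatim repeat of the proofs of Proposition \ref{prop:pos-rank-13-18} and Corollary \ref{cor:pos-rank-13-18} with the excluded fields updated to $\Q(i)$ and $\Q(\sqrt 2)$.
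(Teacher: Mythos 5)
Your proposal is correct and follows essentially the same route as the paper: translate the rational point with nonzero $y$ on the twist into a $\Q(\sqrt{d})$-point of $X_1(16)$ that is not a $\Q$-point, embed via a $\Q$-rational Abel--Jacobi map, invoke part (4) of the torsion lemma (which is exactly where the exclusion $d \neq -1,2$ enters) to see the image has infinite order, and conclude via $\rk J_1(16)(K) = \rk J_1(16)(\Q) + \rk J_1^d(16)(\Q)$. The paper's proof is just a one-line reduction to Proposition~\ref{prop:pos-rank-13-18}, so your more explicit treatment of the twist-to-curve correspondence is a harmless elaboration, not a divergence.
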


\begin{proof}
    A point on $X_1^d(16)(\Q)$ with nonzero $y$-coordinate corresponds to a $K$-point of $X_1(16)$ that is not a $\Q$-point, so the same proof as \Cref{prop:pos-rank-13-18} applies.
\end{proof}

Using the twisted winding element computation from before, we compute the squarefree values of $d$ with $|d| < 10{,}000$ for which the analytic rank of $J_1^d(16)$ is positive; this yields $674$ values. We search for rational points with nonzero $y$-coordinate, and find such points on $55$ of the twists, leaving $619$ values to be dealt with. While we are not able to deal with all of these values, we can deal with the majority of them - $581$ to be specific - via a method due to Bruin known as \emph{Elliptic curve Chabauty}, which we use in conjunction with a Two-cover descent.

\subsection{Elliptic curve Chabauty}

The use of elliptic curve quotients in explicitly carrying out Chabauty's method for the computation of rational points goes back to Bruin's paper \cite{bruin2003chabauty}; the method we use is described in \cite[Section 8]{bruin2009two}, which we now briefly summarise. For simplicity here $C$ will denote a hyperelliptic curve of genus $2$ over $\Q$, although the method works for higher genus hyperelliptic curves over arbitrary number fields. It is based on the idea that, even if $C(\Q)$ is nonempty and so the fake $2$-Selmer group $\Sel^{(2)}_{\textup{fake}}(C/\Q)$ is nonempty, it still contains useful information that can be exploited to fully determine $C(\Q)$.

The main theoretical result of \cite{bruin2009two} is a refined version of the Chevalley-Weil theorem, that every rational point on $C$ lifts to a rational point on one of finitely many $2$-covers $D \overset\pi\rightarrow C$; the algorithmic result is that one can explicitly construct these covers $D$. So if, for each $D$, we can determine $\pi(D(\Q))$, then we are done. The problem is that $D$ has large genus, so computing $D(\Q)$ is difficult. The idea is to work with other quotients of $D$ besides $C$. Indeed, if $C$ is given by a model $y^2 = f(x)$ with $f$ of odd degree, then by taking a degree $3$ factor of $f$ over some number field $L$, and by taking an appropriate twist $\gamma_D$, one may construct the elliptic curve defined over $L$: \[ E_D : \gamma_Dy^2 = g(x) \] together with an $L$-rational map $D \to E_D$. The Chabauty condition is that, if $\rk(E_D(L)) < [L : \Q]$, then $x(E_D(L)) \cap \P^1(\Q)$ is finite and explicitly computable by \cite{bruin2003chabauty}. And since $x(\pi(D(\Q))$ is contained in this finite set, then so is $D(\Q)$. If this method successfully manages to compute $x(E_D(L)) \cap \P^1(\Q)$ and prove that it is finite for every $2$-covering $D$ in the fake 2-Selmer set of $C$, then one can successfully determine $C(\Q)$. The algorithm is implemented in Magma as the intrinsic \path{Chabauty} (note that this is overloaded: the same intrinsic works for the classical Chabauty-Coleman method; the type of the parameters passed to it determine which is used).

In our case, the polynomial $f(x)$ that determines the model of $X_1(16)$ we are working with is highly composite, meaning that the number fields $L$ arising in the above construction will always be quite small (of degrees  $1,2$ or $4$), which aids the computation. Our implementation, as well as the execution of it, may be found in \path{computations/x1_16_chabauty.m}. (The point search occurs in \path{computations/x1_16_point_search.m}.)

The values for which this method did not succeed are as follows:
\begin{equation}\label{eqn:x1_16_fail}
  \begin{aligned}
        -&8259, -7973, -7615, -7161, -7006, -6711, -6503, -6095, -6031,\\
        -&6005, -4911, -4847, -4773, -4674, -4371, -4191, -4074, -3503,\\
        -&3199, -1810, -1749, -815, 969, 1186, 3215, 3374, 3946, 4633, 5257,\\
        &5385, 7006, 7210, 7733,8459, 8479, 8569, 9709, 9961. 
\end{aligned}
\end{equation}

In particular, we see that we are able to deal with all values in the range $|d| < 800$, completing the proof of part (4) of \Cref{thm:main}.

\begin{remark}
    It would be interesting to attempt to deal with the above values for which Elliptic curve Chabauty failed using Two-cover descent together with Quadratic Chabauty on curves $D'$ intermediate to $D$ and $C$. We did attempt to run Quadratic Chabauty directly on these twists of $X_1(16)$, but in each case the Picard rank was $1$ (as may be verified with the code associated to \cite{costa2019rigorous}), which is a nonstarter for that method. However, a combination method may be successful, and would be of interest to consider further.
    
    In particular, if one can use this to determine the rational points on the two twists $C=X_1^{-815}(16)$ and $C=X_1^{969}(16)$ of $X_1(16)$, one would have established explicit uniformity of torsion over quadratic fields $\Q(\sqrt{d})$ for all $|d| < 1000$.
\end{remark}

\section{Comparison of our results with a conjecture of Granville}\label{sec:results}

In this section we report on the results of the computation for the genus $2$ curves $X_1(13)$, $X_1(16)$, and $X_1(18)$. We focus only on the genus $2$ cases because the question of when twists of elliptic curves have positive rank has already been well-studied in the literature; see for example \cite{watkins2014ranks} for computational work in this direction, \cite{li2018recent} for an overview of what was known as of 2018, and Smith's recent work \cite{smith2022distribution} showing that Goldfeld's conjecture (that asymptotically $50\%$ of twists in a quadratic twist family have rank $0$ and $50\%$ have rank $1$) follows from BSD under some additional assumptions on the level-$2$ structure of the elliptic curve. (In forthcoming work of Smith \cite{smith2024bsd} these additional assumptions have been removed.)

Our data can be compared to work of Granville \cite{granville2007rational} that studies how many twists of a given hyperelliptic curve admit nontrivial rational points. By nontrivial, Granville means those with $y$-coordinate $0$, as well as the points at $\infty$ when the defining polynomial of the curve has odd degree, so this is exactly the situation we have studied in \Cref{sec:13_18,sec:16}. In particular, Granville makes the following conjecture about the number of twists that admit such a nontrivial rational point.

\begin{conjecture}[Granville, part of Conjecture 1.3 in \cite{granville2007rational}]
    Let $C$ be a hyperelliptic curve over $\Q$ of genus $g \geq 2$, defined by a model $y^2 = f(x)$ for $f \in \Z[x]$ that does not have repeated roots. Then there exists a positive constant $\kappa_f'$ such that there are $\sim\kappa_f'B^{1/(g+1)}$ squarefree integers $d$ with $|d| < B$ for which the quadratic twist $C_d$ has a nontrivial rational point.
\end{conjecture}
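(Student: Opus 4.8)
\emph{Proof proposal.}

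The plan is to translate the conjecture into a statement about the distinct squarefree \emph{values} of a single binary form, and then to estimate that count from above and below. Write $C : y^2 = f(x)$ with $n := \deg f \in \{2g+1, 2g+2\}$, and let $F(u,w) := w^{n} f(u/w) \in \Z[u,w]$ be the homogenisation. A nontrivial rational point on the twist $C_d$ is an affine point $(x_0,y_0)$ with $d y_0^2 = f(x_0)$ and $y_0 \neq 0$; writing $x_0 = u/w$ in lowest terms, the existence of such a point says precisely that $d$ equals the squarefree part of $F(u,w)$ when $n$ is even, and of $w\,F(u,w)$ when $n$ is odd. Setting $G := F$ or $wF$ accordingly --- in either case a binary form of even degree $2(g+1)$ --- the quantity to be estimated, up to an $O(1)$ error from the finitely many degenerate configurations (roots of $f$, points at infinity, coprime pairs on which $G$ vanishes), is
\[
\mathcal{N}(B) \;=\; \#\bigl\{\, d \text{ squarefree},\ |d| < B \ :\ d = \mathrm{sqf}\,G(u,w) \text{ for some coprime } (u,w) \in \Z^2 \,\bigr\}.
\]

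For the lower bound $\mathcal{N}(B) \gg B^{1/(g+1)}$, restrict to coprime pairs with $\max(|u|,|w|) \le (\eps B)^{1/(2g+2)}$: there are $\asymp B^{1/(g+1)}$ of these, and for each $0 < |G(u,w)| < B$, hence $|\mathrm{sqf}\,G(u,w)| < B$. Writing $r(d)$ for the number of such pairs with $\mathrm{sqf}\,G = d$, one has $\sum_d r(d) \asymp B^{1/(g+1)}$, so by Cauchy--Schwarz it suffices to show $\sum_d r(d)^2 \ll B^{1/(g+1)}(\log B)^{O(1)}$. The sum $\sum_d r(d)^2$ counts pairs $(u,w),(u',w')$ in the box with $G(u,w)\,G(u',w')$ a perfect square; the ``diagonal'' contribution from $(u',w') = \pm(u,w)$ is $\asymp B^{1/(g+1)}$, and for the off-diagonal terms one counts integral points of bounded height on the affine hypersurface $G(x_1,x_2)G(x_3,x_4) = x_5^2$, where dimension-growth / determinant-method bounds in the style of Heath-Brown, Browning and Salberger should give a genuine power saving.

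For the upper bound one must show that all but $O(B^{1/(g+1)})$ of the squarefree $d$ with $|d| < B$ that occur already arise from a pair with $|G(u,w)| \ll B^{1+o(1)}$, since (by the estimate just sketched) pairs of bounded height yield only $O(B^{1/(g+1)})$ distinct values of $\mathrm{sqf}\,G$. Equivalently, one must rule out a large supply of pairs $(u,w)$ with $\mathrm{sqf}\,G(u,w)$ small but $|G(u,w)|$ far larger than $B$, i.e. with $G(u,w) = d\,m^2$ for $|d| < B$ and $m$ large --- but these are exactly the rational points of large height on the twists $C_d$. Controlling their number \emph{and their heights}, uniformly over $|d| < B$, is the crux: what is really needed is a statement of the shape ``for $100\%$ of squarefree $d$, the twist $C_d$ carries no rational point of height exceeding a small power of $|d|$''. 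This is the main obstacle, and it lies beyond current technology --- the recent uniform bounds on the \emph{number} of rational points on a curve of fixed genus give no effective control on \emph{where} those points sit --- which is precisely why the statement remains a conjecture; the present paper only \emph{tests} it numerically.

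Finally, to upgrade the expected $\asymp B^{1/(g+1)}$ to the sharp asymptotic $\sim \kappa'_f B^{1/(g+1)}$, one would refine the box count of the second paragraph into an asymptotic for the number of \emph{distinct} squarefree parts of $G$ over pairs of bounded height --- in effect an estimate for an average of a multiplicative function over the values of the binary form $G$ --- yielding $\kappa'_f$ as a convergent Euler product of local densities, corrected by inclusion--exclusion over the collisions quantified in the second-moment step. Making this constant rigorous is itself substantial, but it is a refinement of, and strictly less difficult than, the uniform-height obstacle above.
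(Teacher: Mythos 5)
There is nothing here to verify a proof against: the statement you were given is a \emph{conjecture} (Granville's Conjecture~1.3), which the paper quotes verbatim and then only tests numerically. The paper itself points out that Granville's conditional derivation from the $abc$-conjecture (his Theorem~1.4) does not even cover the genus~$2$ case relevant here, so the statement is open even under $abc$. Your text is accordingly not a proof but a heuristic programme, and you say so yourself: the step you identify as ``the crux'' --- showing that for $100\%$ of squarefree $d$ the twist $C_d$ has no rational point of height much larger than a small power of $|d|$ --- is left entirely unproved, and it is precisely the place where Granville invokes $abc$ (his Theorem~1.1, quoted in the paper's final section, bounds $|r|,|s| \ll |d|^{1/(2g-2)+o(1)}$ for points on $C_d$). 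Without that input your upper bound does not close, so the argument cannot establish the conjecture.

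Beyond the acknowledged gap, two further steps are asserted rather than proved. The second-moment bound $\sum_d r(d)^2 \ll B^{1/(g+1)}(\log B)^{O(1)}$, which you need even for the lower bound $\gg B^{1/(g+1)}$, is delegated to unspecified ``dimension-growth / determinant-method'' estimates for integral points on $G(x_1,x_2)G(x_3,x_4)=x_5^2$; whether the available uniform bounds give the required power saving in this bidegree is exactly the kind of point that must be checked, not waved at. And the passage from $\asymp B^{1/(g+1)}$ to the asymptotic $\sim \kappa_f' B^{1/(g+1)}$ with Granville's specific constant (the archimedean volume $V_f'$, the automorphism factor, and the Euler product of local densities $\omega'(p^k)$ recalled in Section~5 of the paper) is described in one sentence as ``a refinement''; obtaining an asymptotic for the number of \emph{distinct} squarefree parts, rather than a count of representations, is a genuinely harder problem and is not reduced to anything concrete here. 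So the honest summary is: your reduction to counting squarefree values of the binary form $G$ is in the same spirit as Granville's own heuristic, but what you have written is a plan with three unproved components, one of which you correctly note lies beyond current technology --- which is why the paper treats the statement as a conjecture and compares it against data instead of proving it.
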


\begin{remark}
    Granville makes a similar conjecture about integral points that also applies to elliptic curves; this part of the conjecture is not relevant for our purposes so we omit it here.
\end{remark}

The basis upon which this may be elevated to the stature of a conjecture is one of the main theorems of that paper; namely that this conjecture follows from the $abc$-conjecture provided various assumptions on $f$ are satisfied (see Theorem 1.4 in \emph{loc. cit.}). These conditions do not cover our case of $g = 2$, so in this case, the conjecture is still open even if one assumes the $abc$-conjecture. In this section, we wish to see if our computations agree with the above conjecture of Granville; that is, we will study the growth of $|T_B(N)|$ as $B$ grows, for $N = 13$, $16$ and $18$, and compare it to $\kappa_f'B^{1/3}$.

In Section 1.1 of his paper, Granville gives a formula for the $\kappa_f'$ constant, which we now briefly review in our case of $g = 2$ and with various simplifications; we refer the interested reader to \emph{loc. cit.} for the more general case. To this end, we let $f$ be a monic polynomial of degree $5$ or $6$ with integer coefficients and no repeated roots. We define $F(x,z) := z^6f(x/z)$. For each integer $r$ let $\omega(r)$ be the number of residue classes $t \Mod{r}$ for which $r$ divides $f(t)$, and for $k$ a positive integer write $\omega'(p^k) := p^{k-1}(p - 1)\omega(p^k)$. We define $V_f'$ to be the area of $\left\{ (x,y) \in \R^2 : |F(x,y) \leq 1\right\}$ and $A_f(\Q)$ to be $|\Aut_\Q(C)|/2$, which must equal $1$, $2$, $3$, $4$, $6$, $8$ or $12$. We then have \[ \kappa_f' := \frac{V_f'}{A_f(\Q)}\prod_p\left\{1 + \left(1 - \frac{1}{p^{2/3}}\right)\left(\frac{\omega'(p^2)}{p^{10/3}} + \frac{\omega'(p^4)}{p^{20/3}} + \cdots\right)\right\} \] which converges to a well-defined real number. For $p \nmid \Delta(f)$, the $p$th term of the Euler product is more simply $1 + \omega(p)(p-1)(p^{2/3}-1)/(p^3 - p^{5/3})$.

\begin{remark}
    Granville gives a rather more verbose definition of $A_f(\Q)$ as the number of distinct $\Q$-linear transformations $(x,z) \mapsto (\alpha x + \beta z, \gamma x + \delta z)$ of $F$ for which $F(\alpha x + \beta z, \gamma x + \delta z) \equiv F(x,z) \Mod{(\Q^\ast)^2}$, and $\alpha\delta - \beta\gamma \neq 0$; this is equivalent to how we have written it more succinctly above as $|\Aut_\Q(C)|/2$.  Indeed if $F(\alpha x + \beta z, \gamma x + \delta z) = k^2F(x,y)$ for some rational number $k$ then $y^2-F(x,z) = 0 \Leftrightarrow (ky)^2-F(ax+bz,cx+dz)$. So both $(x,y,z) \mapsto (\alpha x + \beta z,ky,\gamma x + \delta z)$ and $(x,y,z) \mapsto (\alpha x + \beta z,-ky,\gamma x + \delta z)$ are automorphisms of $C$ seen as a curve in weighted projective space. Since every automorphism of $C$ commutes with the hyperelliptic involution one also gets $(x,z) \mapsto (\alpha x + \beta z, \gamma x + \delta z)$ back since every automorphism of $C$ induces an automorphism of $\P^1 = C/h$ where $h$ is the hyperelliptic involution.
\end{remark}

See \path{granville_constants/kappa_consts.py} for the implementation of these constants for the three defining polynomials of interst for us (written explicitly at the beginning of \Cref{sec:13_18,sec:16}). The most challenging aspect of this was the computation of $V_f'$ for $X_1(16)$, which is an unbounded region with $8$ cuspidal spikes; see \path{granville_constants/euclidean_contribution.ipynb} to see this and the other such regions. Note that this implementation does not use interval arithmetic, or yield rigorously proven upper or lower bounds of $V_f'$; this is sufficient for our purpose of getting a sense of the larger picture.

Plotting $|T_B(N)|$ against $B$, and comparing it to $\kappa_f'B^{1/3}$ yields \Cref{fig:graph1}. Here we have assumed that the values we have not been able to decide upon (specifically, $9689$ for $X_1(13)$; $2841$, $4729$ and  $9969$ for $X_1(18)$; and the 38 values in (\ref{eqn:x1_16_fail}) for $X_1(16)$) are not in $T_B(N)$; this seems the most likely outcome for the vast majority of the unhandled cases given that we have searched for points whose $x$-coordinate has na\"ive height at most $10,000$ on each of the relevant twists. Indeed, as can be seen in \path{computations/x1_16_point_search_log.txt}, the vast majority of the rational points found on $X_1^d(16)$ have $x$-coordinate whose na\"ive height is $<100$. Furthermore, the point $$\left(\frac{1681}{882},\frac{479110914870}{882^3}\right)$$ on $X_1^{8570}(16)$ was the only point we found where the height of its $x$ coordinate exceeded $1000$. This is also what is expected according to \cite[Thm. 1.1]{granville2007rational}, which states that under the $abc$-conjecture rational points on twists should have a small $x$-coordinate. 
In any case, these exceptions make up less than 1\%  of the total number of squarefree integers $d$ with $|d| < 10{,}000$ that we can deal with for each of our three curves, so this also does not affect the larger picture.

\begin{figure}
    \centering
    \includegraphics[width=\linewidth]{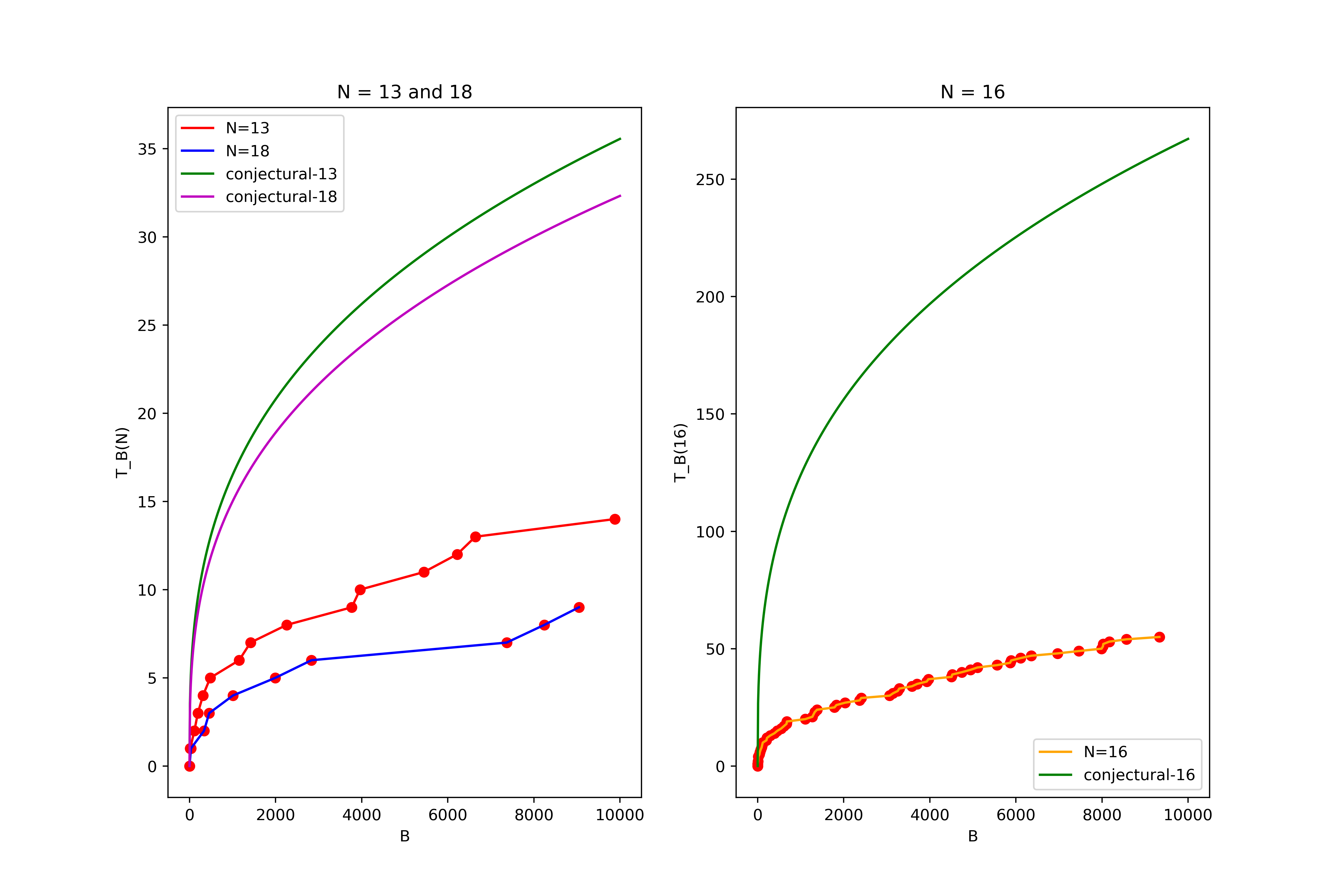}
    \caption{Graphs showing how $|T_B(N)|$ grows with $B$ for $N = 13$, $16$ and $18$, together with the conjectural growth of $\kappa_f'B^{1/3}$; the values of $\kappa_f'$ here are, respectively, $1.65$, $12.4$ and $1.5$.}
    \label{fig:graph1}
\end{figure}

One clearly sees that the conjectural distribution of $\kappa_f'B^{1/3}$ is significantly larger than what the data is suggesting. From \Cref{fig:graph1} even the shape of the asymptotic behaviour is not apparent; so in \Cref{fig:graph2} we have artifically reduced the $\kappa_f'$ constant to show more clearly that the growth of the data is indeed asymptotically proportional to $B^{1/3}$.

\begin{figure}
    \centering
    \includegraphics[width=\linewidth]{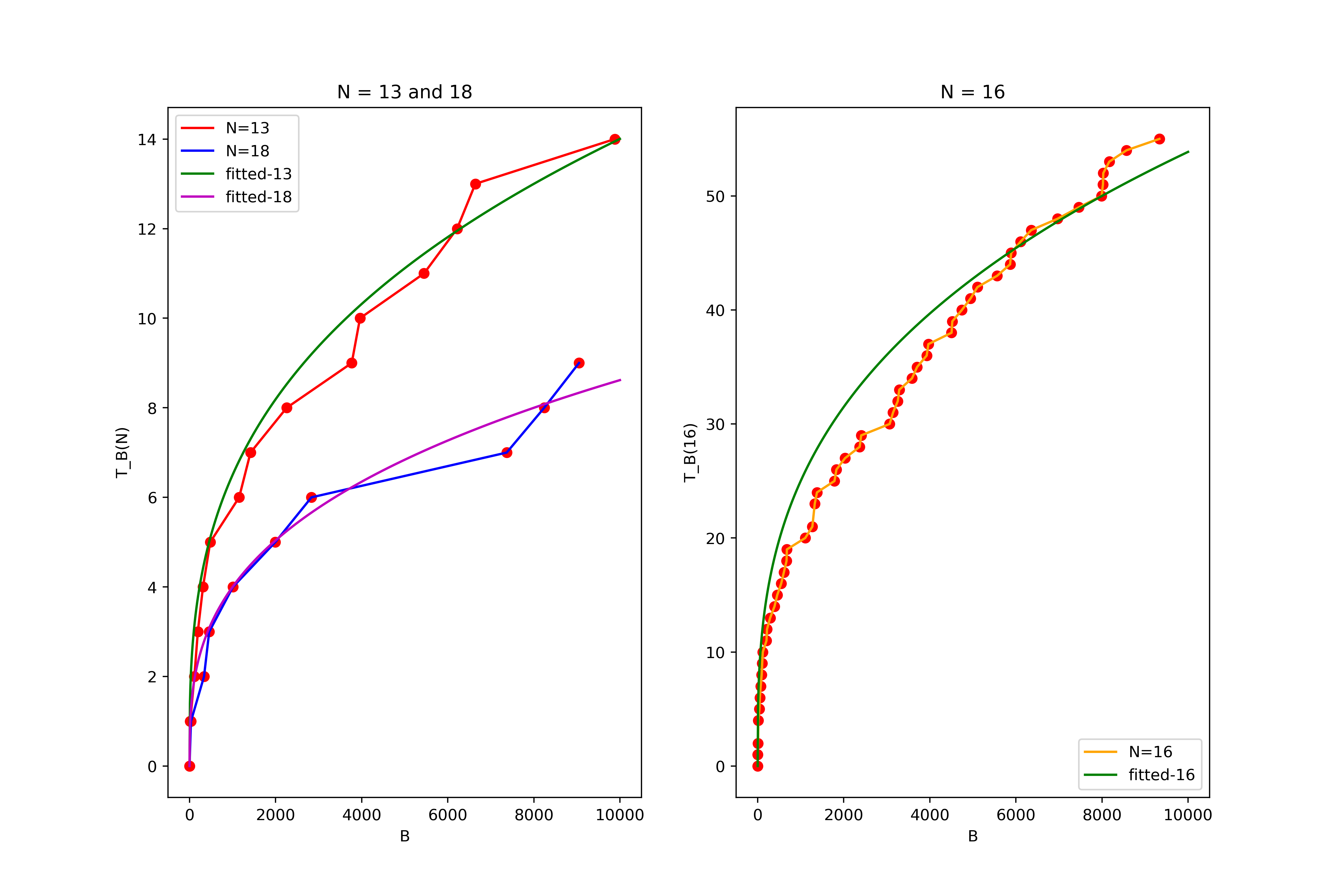}
    \caption{The same graph as \Cref{fig:graph1} but with smaller values of $\kappa_f'$, viz. respectively, $0.65$, $2.5$, $0.4$.}
    \label{fig:graph2}
\end{figure}

Therefore, for $X_1(16)$ (respectively, $X_1(13)$, $X_1(18)$), it seems that the constant $\kappa_f'$ is about 5 (respectively, $2.5$, $3.75$) times too big. To investivate this discrepancy, we plot in \Cref{fig:graph3} $B$ against $|T_B(N)|/B^{1/3}$, which conjecturally should converge to $\kappa_f'$.

\begin{figure}
    \centering
    \includegraphics[width=\linewidth]{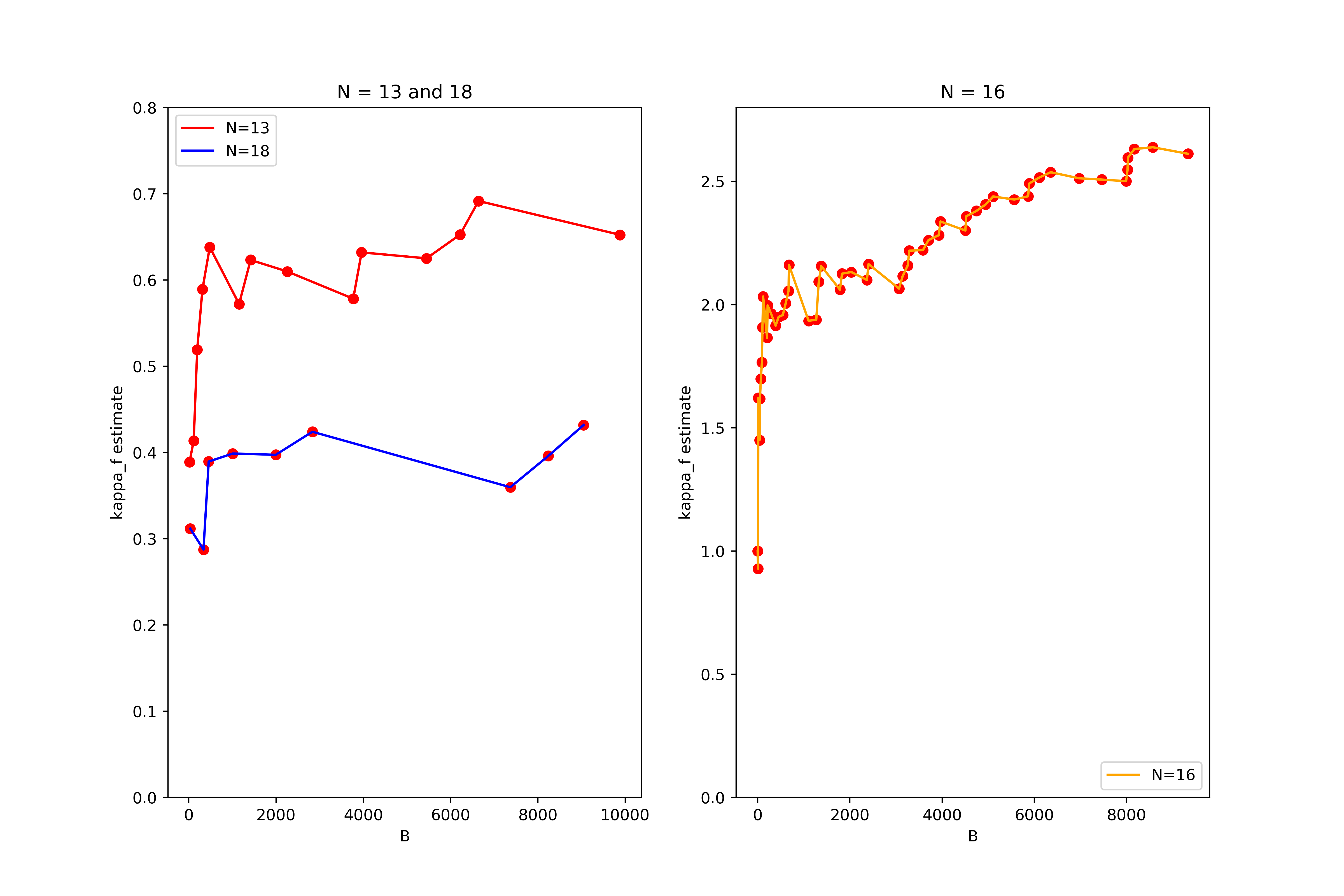}
    \caption{Plot of $B$ against $|T_B(N)|/B^{1/3}$; this should conjecturally converge to $\kappa_f'$.}
    \label{fig:graph3}
\end{figure}

On the graph for $X_1(16)$, there is a clear upward trend, so while it seems to hover at about $2.5$ (the value of the fitted graph), it is not inconceivable that it will continue to drift upwards and reach Granville's value of $12.5$. Put in other words, \Cref{fig:graph3} suggests that our results are not necessarily incompatible with Granville's $\kappa_f'$ constant; more data is needed to determine this.

Other potential reasons for this discrepancy are as follows:

\begin{enumerate}
    \item When computing $\kappa_f'$ we clearly had to take only finitely many summands in the sum for bad primes, and only finitely many primes in the Euler product. However, taking more would only \emph{increase} $\kappa'_f$, making the discrepancy larger.
    \item The value of $V_f'$ was approximate, and involved numerical integration in the real plane. However, it seems unlikely that this would be off by an order of magnitude.
    \item Our assumption that the values we were not able to determine are not actually in $T_B(N)$. However, as explained above, these account for a tiny percentage of the total values (especially so in the case of $X_1(13)$ and $X_1(18)$), and so also would not explain this larger observed discrepancy. Taking the other extreme - that all of these unknown values actually are in $T_B(N)$ - would increase $|T_{10,000}(16)|$ (respectively, $|T_{10,000}(13)|$, $|T_{10,000}(18)|$) by $69\%$ (respectively, $7\%$, $33\%$). This does seem more significant (even if extremely unlikely), but still is not enough to explain the e.g. factor of $5$ discrepancy in the $X_1(16)$ case (much less for the other two curves).
\end{enumerate}

The graphs in this section may be generated with the script \path{graphs/results.py} in the top level.

\section{Future research}\label{sec:future}

In light of the results and discussion of \Cref{sec:results} it would therefore be interesting to obtain more data to fully ascertain the situation regarding the growth of $|T_B(N)|$ as $B$ increases, or for this to be investigated further. More rigorous computation of Granville's $\kappa_f'$ constants would also be of value to undertake.

One of Granville's main theorems in \emph{loc. cit.} is as follows.

\begin{theorem}[Granville, Theorem 1.1(ii) in \cite{granville2007rational}]
Assume that the $abc$-conjecture is true. Suppose that $f(x) \in \Z[x]$ does not have repeated roots and $d \in \Z$. Consider the curve $C_d$ given by $dy^2=f(x)$.
If $g(C_d) \geq 2$ then the rational points on $C_d$ with $x$-coordinate $r/s$ where $(r, s) = 1$ satisfy $$|r|, |s| \ll |d|^{1/(2g(C_d)-2)+o(1)}.$$
\end{theorem}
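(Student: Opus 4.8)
\medskip
\noindent\textbf{Proof strategy.}
The plan is to convert a rational point into a value of a binary form that is a \emph{bounded} squarefree integer times a perfect square, and then play the $abc$-conjecture against that structure. Write $g:=g(C_d)\geq 2$ and $n:=\deg f$, let $F(X,Z)=Z^nf(X/Z)\in\Z[X,Z]$ be the homogenisation of $f$, and set
\[
  \widehat F \;:=\; \begin{cases} F & \text{if } n \text{ is even},\\ Z\cdot F & \text{if } n \text{ is odd}.\end{cases}
\]
Since $f$ has no repeated roots, $F$ is a squarefree binary form, and its leading coefficient is nonzero so $Z\nmid F$; hence $\widehat F$ is a squarefree binary form of degree $D:=2g+2\geq 6$. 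First I would take an affine point $(x_0,y_0)\in C_d(\Q)$, write $x_0=r/s$ with $\gcd(r,s)=1$ and $s\neq 0$, and put $H:=\max(\lvert r\rvert,\lvert s\rvert)$. Clearing denominators in $d y_0^2=f(r/s)$ gives $F(r,s)=d\,s^n y_0^2$, and in either parity this rearranges to
\[
  \widehat F(r,s) \;=\; d\cdot\bigl(s^{\,g+1}\,y_0\bigr)^2 .
\]
Thus $\widehat F(r,s)=d\,q^2$ with $q\in\Q$, so the squarefree part of the integer $\widehat F(r,s)$ equals that of $d$: we may write $\widehat F(r,s)=d_1 m^2$ for a squarefree integer $d_1$ with $\lvert d_1\rvert\leq\lvert d\rvert$ and some $m\in\Z$ (one checks $\widehat F(r,s)/d_1$ has non-negative even valuation at every prime, hence is an integer square). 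Since $\widehat F$ is a fixed form of degree $D$ we also get the crude bound $\lvert m\rvert=\sqrt{\lvert\widehat F(r,s)\rvert/\lvert d_1\rvert}\leq\sqrt{\lvert\widehat F(r,s)\rvert}\ll_f H^{D/2}$.

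Next I would estimate $\operatorname{rad}(\widehat F(r,s))$ from above and below. From the displayed factorisation, $\operatorname{rad}(\widehat F(r,s))=\operatorname{rad}(d_1 m^2)\leq\lvert d_1 m\rvert=\sqrt{\lvert d_1\rvert\cdot\lvert\widehat F(r,s)\rvert}\ll_f\lvert d\rvert^{1/2}H^{D/2}$. The lower bound is where the hypothesis enters: the $abc$-conjecture implies that for any squarefree binary form $G\in\Z[X,Z]$ of degree at least $3$ and any coprime integers $r,s$ one has $\operatorname{rad}(G(r,s))\gg_{G,\varepsilon}\max(\lvert r\rvert,\lvert s\rvert)^{\deg G-2-\varepsilon}$ for every $\varepsilon>0$ (this follows by factoring $G$ into linear forms over a splitting field and applying $abc$ to a three-term linear relation among three of those forms; this radical estimate is the crux of the whole argument). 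Applying it to $G=\widehat F$ gives $\operatorname{rad}(\widehat F(r,s))\gg_{f,\varepsilon}H^{D-2-\varepsilon}$. Points with $\widehat F(r,s)=0$ are exactly those with $y_0=0$, for which $r/s$ is one of the finitely many roots of $f$ and so $\lvert r\rvert,\lvert s\rvert\ll_f 1$ trivially; likewise the points at infinity do not affect the numerator--denominator bound.

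Combining the two estimates, $H^{D-2-\varepsilon}\ll_{f,\varepsilon}\lvert d\rvert^{1/2}H^{D/2}$, hence $H^{(D-4)/2-\varepsilon}\ll_{f,\varepsilon}\lvert d\rvert^{1/2}$. Because $g\geq 2$ we have $D-4=2g-2>0$, so this yields $H\ll_{f,\varepsilon}\lvert d\rvert^{1/(2g-2-2\varepsilon)}$, and letting $\varepsilon\to0$ gives $\lvert r\rvert,\lvert s\rvert\leq H\ll_f\lvert d\rvert^{1/(2g(C_d)-2)+o(1)}$, as claimed. I expect the $abc$-radical estimate for binary forms to be the only substantial step; the remainder is bookkeeping with denominators together with the elementary fact that a fixed squarefree binary form of degree $D$ takes values $\asymp_f H^{D}$. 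It is also worth noting precisely where the hypothesis $g(C_d)\geq2$ is used: it is exactly what makes the exponent of $H$ on the left positive after cancellation, turning the inequality into a genuine bound rather than a vacuous one — consistent with the fact that no such bound can hold for $g=1$.
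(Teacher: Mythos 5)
The paper does not actually prove this statement: it is imported verbatim from Granville \cite{granville2007rational}, so the only available comparison is with Granville's own argument, which (as the paper remarks in \Cref{sec:future}) manufactures $abc$-triples from rational points on $C_d$ via a Belyi map factoring through the hyperelliptic involution. Your proposal is morally the same route, packaged through the standard intermediate statement: the identity $\widehat F(r,s)=d\,(s^{g+1}y_0)^2$, the fact that the squarefree part of $\widehat F(r,s)$ is (up to sign) that of $d$, the upper bound $\operatorname{rad}(\widehat F(r,s))\ll_f |d|^{1/2}H^{D/2}$, and the comparison with the conditional lower bound for radicals of squarefree binary forms do give exactly $H\ll |d|^{1/(2g-2)+o(1)}$; your parity trick $\widehat F=Z\cdot F$ and the separate treatment of $y_0=0$ and points at infinity are also fine.

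The one weak point is the step you yourself call the crux. The bound $\operatorname{rad}(G(r,s))\gg_{G,\varepsilon}H^{\deg G-2-\varepsilon}$ for squarefree binary forms is a genuine theorem conditional on $abc$ (Granville, and independently Langevin), but your parenthetical justification --- factor $G$ into linear forms over a splitting field and apply $abc$ to a three-term relation --- does not suffice as stated: applied over $\Q$ it only yields exponent $1-\varepsilon$ rather than $\deg G-2-\varepsilon$, and over a splitting field it silently requires an $abc$-statement over number fields, which is stronger than the hypothesis; the actual deduction is a nontrivial argument (Belyi-type, in Granville's treatment). So either cite that radical bound as a black box, in which case your proof is correct, or accept that the real content of the theorem is concentrated in that lemma rather than in the bookkeeping you carried out.
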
 

The proof of this theorem contains a way of constructing $abc$-triples from points on $C_d$. However this construction relies on a Belyi map on $C$ that factors via the hyperelliptic involution. Now for the modular curves $X_1(N)$ one has that the $j$-invariant $j: X_1(N) \to X(1) \cong \P^1$ only ramifies at $0, 1728, \infty$. So in particular $j/1728$ is a Belyi map. Furthermore the hyperelliptic involutions of $X_1(13)$ and $X_1(16)$ are the diamond operators $\langle 5 \rangle$ and $\langle 7 \rangle$ and hence the $j$-invariant factors via the hyperelliptic map. In particular it should be possible to make Granville's method completely explicit for $X_1(13)$ and $X_1(16)$ by constructing $abc$-triples related to the $j$-invariants of points on $X_1^d(13)$ and $X_1^d(16)$ with exceptionally large $x$ coordinate. It would be of interest for this idea to be fleshed out.

For $X_1(18)$ the hyperelliptic involution is $w_2\langle 7 \rangle$, meaning the $j$-invariant doesn't factor via the hyperelliptic map and hence the idea sketched above will not work here. It would thus be interesting to find another idea that would deal with this case.

In a different direction, one could consider the uniformity of torsion question for cubic fields, given that we now have the list of groups in this case. Note that Bruin and Najman \cite{bruin2016criterion} have found all possible torsion subgroups over the cyclic cubic field $\Q(\zeta_{13} + \zeta_{13}^5 + \zeta_{13}^8 + \zeta_{13}^{12})$ as well as the quartic field $\Q(\zeta_5)$, so this would be a good place to start with this investigation.

\bibliographystyle{alpha}
\bibliography{references.bib}{}
\end{document}